\title{The weak Bernoulli property for matrix Gibbs states}
\author{Mark Piraino \thanks{Department of Mathematics and Statistics, University of Victoria.}}
\date{\today}
\theoremstyle{definition}
\newtheorem{theorem}{Theorem}[section]
\newtheorem{lemma}[theorem]{Lemma}
\newtheorem{example}[theorem]{Example}
\newtheorem{corollary}[theorem]{Corollary}
\newtheorem{proposition}[theorem]{Proposition}
\newtheorem{definition}[theorem]{Definition}
\newtheorem*{acknowledgements}{Acknowledgements}
\newcommand{\inn}[1]{\left\langle #1 \right\rangle}
\newcommand{\set}[1]{\left\{ #1 \right\}}
\newcommand{\abs}[1]{\left| #1 \right|}
\newcommand{\norm}[1]{\left \| #1 \right \|}
\newcommand{\wh}[1]{\widehat{ #1 }}
\newcommand{\ol}[1]{\overline{#1}}
\def\[#1\]{\begin{align*}#1\end{align*}}
\DeclareMathOperator{\spn}{span}
\DeclareMathOperator{\var}{var}
\DeclareMathOperator{\tr}{tr}
\DeclareMathOperator{\interior}{int}
\newcommand{\Q}{\mathcal{Q}}
\newcommand{\R}{\mathbb{R}}
\newcommand{\Z}{\mathbb{Z}}
\newcommand{\RP}{\mathbb{R}\mathbb{P}}
\def\M{\mathcal{M}}     
\def\A{\mathcal{A}}    
\def\P{\mathcal{P}}
\def\H{\mathcal{H}}
\def\S{\Sigma}  
\def\r{\mathcal{R}}
\newcommand{\e}{\varepsilon}
\begin{document}

\maketitle

\begin{abstract}
	We study the ergodic properties of a class of measures on $\Sigma^{\mathbb{Z}}$ for which $\mu_{\mathcal{A},t}[x_{0}\cdots x_{n-1}]\approx e^{-nP}\left \|A_{x_{0}}\cdots A_{x_{n-1}}\right \| ^{t}$, where $\mathcal{A}=(A_{0}, \ldots , A_{M-1})$ is a collection of matrices. The measure $\mu_{\mathcal{A},t}$ is called a matrix Gibbs state. In particular we give a sufficient condition for a matrix Gibbs state to have the weak Bernoulli property. We employ a number of techniques to understand these measures including a novel approach based on Perron-Frobenius theory. We find that when $t$ is an even integer the ergodic properties of $\mu_{\mathcal{A} ,t}$ are readily deduced from finite dimensional Perron-Frobenius theory. We then consider an extension of this method to $t>0$ using operators on an infinite dimensional space. Finally we use a general result of Bradley to prove the main theorem.
\end{abstract}

\section{Introduction}

We recall the definition of a scalar  Gibbs state. Let $\Sigma_{A}$ be a shift of finite type and $\varphi:\Sigma_{A}\to \R$. We say that a shift invariant measure, $\mu_{\varphi}$, is a scalar Gibbs state for $\varphi$ provided there exists $C>0$ and $P$ such that
\[C^{-1}\leq \frac{\mu_{\varphi}([x_{0}\cdots x_{n-1}])}{e^{-nP+S_{n}\varphi}}\leq C \]
for all $x \in \Sigma_{A}$ and $n>0$ (where $S_{n}\varphi=\sum_{k=0}^{n-1}\varphi(\sigma^{k}x)$). By analogy if $\A=(A_{0},\ldots, A_{M-1})\in M_{d}(\R)^{M}$ and $t>0$ we say that a shift invariant measure $\mu_{\A , t}$ is a matrix Gibbs state for $(\A , t)$ provided there exists a constant $C>0$ and $P$ such that
\begin{equation}\label{eq:GibbsInequalityMatrices}
C^{-1}\mu_{\A , t}([x_{0}\cdots x_{n-1}])\leq e^{-nP}\norm{A_{x_{0}}\cdots A_{x_{n-1}}}^{t}\leq C \mu_{\A, t}([x_{0}\cdots x_{n-1}])
\end{equation}
for all $x \in \Sigma^{\Z}$ ($\Sigma=\set{0, \ldots , M-1}$) and $n>0$. Notice we are working with the two-sided shift and not as has been done in previous literature the one-sided shift. Thus in a strict sense one may consider that we are working with the invertible extension of matrix Gibbs states, this is important when working on the isomorphism problem and it is also necessary so that we can apply the results in \cite{MR684493}. When $t=1$ we refer to the measure simply as the Gibbs state for $\A$. $P$ is uniquely determined by \eqref{eq:GibbsInequalityMatrices} and is called the pressure denoted $P(\A , t)$. A computation shows that 
\[ P(\A,t)=\lim_{n \to \infty}\frac{1}{n}\log \left(\sum_{x_{0}\cdots x_{n-1}}\norm{A_{x_{0}}\cdots A_{x_{n-1}}}^{t}\right). \]
For the remainder of the article a Gibbs state will always refer to a matrix Gibbs state. Matrix Gibbs states are also equilibrium states for a sub-additive variational principle \cite{MR2373208}
\begin{equation}\label{eq:variationalprinciple}
P(\A ,t)=\sup_{\mu\in \M(\sigma)}\left[h(\mu)+t\Lambda(\A,\mu)\right].
\end{equation}
where $\Lambda(\A ,\mu)$ is the maximal Lyapunov exponent 
\[ \Lambda(\A , \mu)=\lim_{n \to \infty}\frac{1}{n}\int \log \norm{A_{x_{0}}\cdots A_{x_{n-1}}}d \mu(x). \]

Measures which achieve the supremum are called matrix equilibrium states. Such measures always exist by weak$^{\ast}$ compactness and upper semi-continuity of $h(\mu)+t \Lambda(\A,\mu)$. The connection between Gibbs states and equilibrium states for the variation principle \eqref{eq:variationalprinciple} was studied in \cite{MR2784616}. The study of these measures was originally motivated by their applications to dimension theory \cite{MR1909650}. However recently interest has been shown in determining their ergodic properties \cite{morris_2017} \cite{morris_2017_2}. In the classical case for H\"older continuous functions, scalar Gibbs states are well known to have many nice statistical properties. It is natural to ask to what extent matrix Gibbs states share these properties. 

One of the strongest of these properties is that the dynamical system defined by the shift map and a scalar Gibbs state for a H\"older potential is isomorphic to a Bernoulli shift and this is the problem we will focus on this article. This is a particularly appealing property because Bernoulli shifts are classified up to isomorphism by their entropy \cite{MR0257322}. In general it is very difficult to explicitly construct isomorphisms between measure preserving systems. One of the most common methods for demonstrating a measure preserving system is isomorphic to a Bernoulli shift is to show that it is weak Bernoulli and appeal to \cite{MR0274718}. This is the strategy we will take in this paper. The same method has been used by Bowen \cite{MR0387539} for scalar Gibbs states. Recall what it means for a dynamical system to be weak Bernoulli.

\begin{definition}
	We say that partitions $\Q$ and $\r$ are $\e$-independent (written $\Q\perp^{\e}\r$) if 
	\[ \sum_{q\in \Q, r\in \r}\abs{\mu(q\cap r)-\mu(q)\mu(r)}<\e. \]
	We say that a partition $\P$ is weak Bernoulli if for every $\e>0$ there exists $N$ such that $\bigvee_{i=0}^{s-1}\sigma^{-i}\P\perp^{\e}\bigvee_{i=t}^{t+r-1}\sigma^{-i}\P$ for all $r,s\geq 0$ and $t \geq s+N$. We say that $\mu_{\A,t}$ is weak Bernoulli if the standard partition $\P=\set{[i]:0 \leq i \leq M-1}$ is weak Bernoulli.
\end{definition}
For a word $I=i_{0}i_{1} \cdots i_{n-1}$ we write
\[ A_{I}:=A_{i_{0}}A_{i_{1}}\cdots A_{i_{n-1}} \]
and we denote the length of the word $I$ by $\abs{I}$. We say that $\A=(A_{0},  \ldots, A_{M-1})\in M_{d}(\R)^{M}$ is \emph{irreducible} if the matrices have no common proper and non-trivial invariant subspace. This implies that there exists a constant $\delta>0$ such that
\begin{equation}\label{eq:irrinequality}
 \sum_{\abs{K}\leq d}\norm{A_{I}A_{K}A_{J}}\geq \delta \norm{A_{I}} \norm{A_{J}} 
\end{equation}
for all $I,J$. With this in mind we make the following definition

\begin{definition}
	We say that $\A=(A_{0},\ldots, A_{M-1})$ is \emph{primitive} if there exists an $N$ and a $\delta>0$ such that
	\begin{equation}\label{eq:priminequality}
	 \sum_{\abs{K}=N}\norm{A_{I}A_{K}A_{J}}\geq \delta \norm{A_{I}}\norm{A_{J}} 
	 \end{equation}
	for all $I,J$.
\end{definition}

For both irreducible and primitive collections of matrices, matrix Gibbs states are known to exist and be unique \cite[theorem 5.5]{MR2739782} for all $t>0$. The terms irreducible and primitive are familiar from Perron-Frobenius theory and indeed the notions are connected. Let $L_{\A}:M_{d}(\R)\to M_{d}(\R)$ be defined by $L_{\A}B=\sum_{i}A_{i}^{\ast}BA_{i}$, then $L_{\A}$ preserves the cone of positive semi-definite matrices. The operator $L_{\A}$ appears in connection with a class of measures related to fractal geometry called Kusuoka measures \cite{MR1025071} (see example \ref{ClassicalKusuokaMeasure}). One can check that if $L_{\A}$ is irreducible (respectively primitive) in the sense of Perron-Frobenius theory then $\A$ satisfies equation \eqref{eq:irrinequality} (respectively equation \eqref{eq:priminequality}). For the details see proposition \ref{IrrPrimDefProp}. Our main theorem is the following.

\begin{theorem}\label{MainThm}
	Suppose that $\A=(A_{0}, \ldots , A_{M-1})$ is primitive. Then for any $t>0$ the unique $t$-Gibbs state for $\A$ is weak Bernoulli.
\end{theorem}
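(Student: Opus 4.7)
The strategy has three layers: reduce weak Bernoulli to a decoupling estimate on matrix products via the Gibbs inequality \eqref{eq:GibbsInequalityMatrices}, obtain that estimate from a Perron-Frobenius-type spectral gap for a suitable transfer operator, and then invoke Bradley's theorem. By $\sigma$-invariance and \eqref{eq:GibbsInequalityMatrices}, for cylinders $[I]$ of length $s$ and $[J]$ of length $r$ and any gap $n \geq 0$ one has, up to constants depending only on $C$ from \eqref{eq:GibbsInequalityMatrices},
\[ \mu_{\A,t}([I]\cap \sigma^{-(s+n)}[J]) \asymp e^{-(s+n+r)P}\sum_{|K|=n}\norm{A_I A_K A_J}^{t}, \]
while $\mu_{\A,t}([I])\asymp e^{-sP}\norm{A_I}^{t}$ and $\mu_{\A,t}([J])\asymp e^{-rP}\norm{A_J}^{t}$. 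So to obtain weak Bernoulli it suffices to control the ratio $(e^{nP}\norm{A_I}^{t}\norm{A_J}^{t})^{-1}\sum_{|K|=n}\norm{A_I A_K A_J}^{t}$ uniformly in $I,J$, with an explicit rate in $n$.

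Next I would build a positive transfer operator encoding these matrix sums. When $t=2k$ is an even integer the natural operator is finite dimensional, $L_{\A,2k}(B)=\sum_i (A_i^{\otimes k})^{\ast}B A_i^{\otimes k}$, and primitivity of $\A$ translates (by an argument parallel to proposition \ref{IrrPrimDefProp}) into primitivity of $L_{\A,2k}$ in the Perron-Frobenius sense; classical Perron-Frobenius then supplies a simple leading eigenvalue $e^P$, a strictly positive leading eigenvector, and a spectral gap. For general $t>0$ this algebraic device is unavailable, so the plan is to lift to an operator on continuous functions on $\RP^{d-1}$ (or homogeneous of degree $t$ on $\R^{d}\setminus\{0\}$) of the form $(\L_t f)(\ol{v})=\sum_i \norm{A_i v}^{t} f(\ol{A_i v})$, and appeal to Birkhoff's theorem for the Hilbert projective metric: primitivity forces some iterate $\L_t^{N}$ to strictly contract the nonnegative cone, producing again a rank-one asymptotic with exponential rate and a strictly positive continuous leading eigenfunction.

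Combined with \eqref{eq:GibbsInequalityMatrices}, this contraction yields a bound
\[ \left| \frac{\mu_{\A,t}([I]\cap \sigma^{-(s+n)}[J])}{\mu_{\A,t}([I])\,\mu_{\A,t}([J])}-1 \right|\leq C\rho^{n} \]
for some $\rho\in(0,1)$ and some $C$ independent of $s,r,I,J$. Passing from cylinders to general events in the past and future $\sigma$-algebras by a disjoint-union argument, this is precisely the $\psi$-mixing condition. Bradley's theorem \cite{MR684493} upgrades $\psi$-mixing of a stationary finite-state process to absolute regularity, and for such a process absolute regularity is exactly the weak Bernoulli property of the generating partition $\P$.

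The central difficulty is the non-integer $t$ case, where the transfer operator must be infinite dimensional and its spectral gap must be strong enough to give multiplicative (ratio) estimates rather than merely additive (difference) ones: pure exponential decay of correlations would give only $\phi$-mixing and be insufficient for Bradley's criterion. The strict positivity of the Perron-Frobenius eigenfunction, guaranteed by primitivity, is exactly what converts additive estimates into $\psi$-mixing bounds, but quantifying this uniformly in cylinder words $I, J$ of arbitrary length is the technical heart of the argument.
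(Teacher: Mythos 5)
Your first and third layers are on target --- the reduction of the problem to a two-sided comparison between $\mu_{\A,t}([I]\cap\sigma^{-(s+n)}[J])$ and $\mu_{\A,t}([I])\mu_{\A,t}([J])$ via the Gibbs inequality, and the use of Bradley's theorem to pass from $\psi$-mixing to weak Bernoulli, are exactly the skeleton of the paper's argument. The genuine gap is your middle layer. You propose to control $\sum_{\abs{K}=n}\norm{A_IA_KA_J}^{t}$ with an explicit exponential rate in $n$ by means of a spectral gap for the transfer operator $\L_t f(\ol{v})=\sum_i\norm{A_iv}^{t}f(\ol{A_iv})$, obtained from Birkhoff cone contraction and ``primitivity.'' But primitivity in the sense of \eqref{eq:priminequality} does not give you this: the spectral gap for $\L_t$ on $C^{\e}(\RP^{d-1})$ is a theorem of Guivarc'h and Le Page which requires the matrices to be \emph{invertible, strongly irreducible and proximal} --- hypotheses strictly stronger than, and not implied by, \eqref{eq:priminequality}. (A primitive collection need not be proximal, and the cone of nonnegative functions on $\RP^{d-1}$ has infinite Hilbert-metric diameter, so $\L_t^{N}$ mapping it strictly inside itself does not follow from \eqref{eq:priminequality}.) This transfer-operator route is carried out in the paper only under those stronger hypotheses (theorem \ref{ExpMixingThm}), not for theorem \ref{MainThm}.

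The repair is that Bradley's criterion does not need a rate at all. If the two-sided bound \eqref{eq:mixinginequaility} holds for a \emph{single} gap $N$ with a uniform constant $C$, then summing over intermediate words propagates the same constant to every gap $n\geq N$; this bounds $\psi^{\ast}_n$ above and $\psi'_n$ below uniformly, forces mixing via Ornstein's theorem, and Bradley's theorem then yields $\psi$-mixing and hence weak Bernoulli (lemma \ref{BradleyLemma}). The lower bound at gap $N$ comes directly from the primitivity inequality $\sum_{\abs{K}=N}\norm{A_IA_KA_J}\geq\delta\norm{A_I}\norm{A_J}$ after raising to the power $t$: for $0<t\leq 1$ use $\left(\sum a_K\right)^{t}\leq\sum a_K^{t}$, and for $t>1$ use H\"older's inequality to get $\sum_{\abs{K}=N}\norm{A_IA_KA_J}^{t}\geq M^{-Nt/q}\left(\sum_{\abs{K}=N}\norm{A_IA_KA_J}\right)^{t}$ with $1/t+1/q=1$. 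The upper bound holds for every gap by submultiplicativity of the norm and the Gibbs inequality. So your ratio estimate with the $C\rho^{n}$ error term is both unobtainable under the stated hypothesis and unnecessary; the uniform boundedness of the ratio, which primitivity hands you for free at the single time $N$, is what the argument actually runs on.
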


The proof of theorem \ref{MainThm} can be found in section \ref{sec:GeneralMatrices}. The proof relies on a general result of Bradley \cite{MR684493}, which is somewhat opaque. With this in mind we also present a method for understanding matrix Gibbs states through transfer operators which is interesting in its own right. Understanding the ergodic/statistical properties of Gibbs states in sub-additive thermodynamic formalism has long been a challenge, with most results being achieved using fairly ad-hoc methods. This is in contrast to the case for scalar Gibbs states which has a well developed methodology for deducing ergodic/statistical properties relying on the transfer operator. In this article we adapt the classical doctrine of transfer operators for scalar Gibbs states to matrix Gibbs states.

In section \ref{sec:MatricesCommonCone} we show that in the case when $t$ is an even integer the ergodic properties of $\mu_{A ,t}$ can be readily understood by studying the convergence properties of a matrix. As a consequence we can obtain an exponential mixing result which includes an explicit rate determined by the spectral gap of a finite dimensional matrix. This naturally leads to the problem of generalizing this approach to $t>0$. In section \ref{sec:fastmixing} we generalize section \ref{sec:MatricesCommonCone} using operators on a suitable infinite dimensional vector space. A major advantage of the approach in sections \ref{sec:MatricesCommonCone} and \ref{sec:fastmixing} is that we can give an explicit construction of certain Gibbs states, including a formula for the measure of a cylinder set. Previous methods have relied on abstract compactness arguments, realizing the Gibbs state as a weak$^{\ast}$ limit point of a sequence of measures. As many properties are not preserved under weak$^{\ast}$ limits this makes an analysis of the Gibbs state difficult. Our transfer operator approach allows us to give direct proofs of ergodic properties. It also provides a strong intuition for understanding how properties of the collection $\A$ are reflected in the ergodic properties of $\mu_{\A,t}$.

\section{Matrices which preserve a common cone}\label{sec:MatricesCommonCone}

One particular class of matrix Gibbs states has appeared extensively in applications. Consider the following examples.

\begin{example}
	Bernoulli measures, take $d=1$.
\end{example}

\begin{example}
	Factors of Markov measures. The $1$-Gibbs states for collections of non-negative matrices are precisely factors of Markov measures, for details see \cite{MR2866664} or \cite{chazottes2003projection}, \cite{Yoo2010}. In fact, allowing the operators in $\A$ to act on an infinite dimensional space, factors of Gibbs states for H\"older potentials can be viewed as Gibbs states for a suitable collection of operators, see \cite{HolderGibbsStates}.
\end{example}

\begin{example}\label{ClassicalKusuokaMeasure}
	The Kusuoka measure \cite{MR1025071} was originally studied because of its connections to fractal geometry. We briefly recall the construction. Let $L_{i}B= A_{i}^{\ast}BA_{i}$ and $L_{\A}=\sum_{i}L_{i}$. When $\A$ is irreducible there exist $U,V$ positive definite matrices such that $L_{\A}U=\rho(L_{\A})U$, $L_{\A}^{\ast}V=\rho(L_{\A})V$ (notice that $L_{\A}^{\ast}B= \sum_{i}A_{i}BA_{i}^{\ast}$) and $\inn{U,V}_{\text{HS}}=1$ (where $\inn{A,B}_{\text{HS}}=\tr(A^{\ast}B)$). The Kusuoka measure is then obtained by extending
	\[ \mu[x_{0}\cdots x_{n-1}]=\rho(L_{\A})^{-n}\inn{L_{x_{0}}L_{x_{1}}\cdots L_{x_{n-1}}U,V}_{\text{HS}} \] 
	to a measure using Carath\'eodory's extension theorem. It was shown in \cite{morris_2017} that the Kusuoka measure is a $2$-Gibbs state. We will generalize this result to $k$-Gibbs states for $k$ even in example \ref{GeneralizedKusoukaMeasure}. Observe that thinking of the linear maps $L_{i}$ as matrices we have that the Kusuoka measure is the $1$-Gibbs state for the collection $\wh{\A}=(L_{0}, \ldots , L_{M-1})$ each of which preserves the cone of positive semi-definite matrices.
\end{example}

The property shared by all of these matrix equilibrium states is that all of the matrices preserve a common cone. Our goal for this section is then to treat these measures in an abstract manner. As one of the applications of this section is the Kusuoka measure, we work with matrices preserving an abstract cone $K$. For the most part, the reader will lose no intuition by simply thinking of $K$ as being the positive quadrant of $\R^{d}$. For the reader's convenience we have collected some definitions and facts about abstract cones in finite dimensional vector spaces in the appendix. Recall that 
\[ \var_{n}f=\sup\set{\abs{f(x)-f(y)}: x_{i}=y_{i} \text{ for all }\abs{i} \leq n-1} \]
and for $\theta \in (0,1)$ define
\[ \H_{\theta}=\set{f \in C(\Sigma^{\Z}):\text{There exists a constant }K>0\text{ for which }\var_{n}f\leq K \theta^{n}}. \]
We denote the least such constant by $\abs{f}_{\theta}$ and $\H_{\theta}$ becomes a Banach space with norm $\norm{f}_{\theta}=\norm{f}_{\infty}+\abs{f}_{\theta}$. The goal of this section is to prove the following theorem.

\begin{theorem}\label{Section2thm}
	Let $\A=(A_{0},\ldots, A_{M-1})\in M_{d}(\R)^{M}$. Suppose that each $A_{i}$ is non-negative with respect to a cone $K$ and $A:=\sum_{i}A_{i}$ is such that $\sum_{k=0}^{d-1}A^{k}$ maps $K\setminus \set{0}$ into the interior of $K$ (that is $A$ is $K$-irreducible). Then there exists a $1$-Gibbs state for $\A$ denoted $\mu_{\A}$ moreover
	\begin{enumerate}
		\item 
		$\mu_{\A}$ is ergodic and thus unique, and $P(\A,1)=\log \rho(A)$.
		
		\item
		If there exists an $N$ such that $A^{N}$ maps $K\setminus \set{0}$ into the interior of $K$ (that is $A$ is $K$-primitive) then 
		\begin{enumerate}
			\item 
			$\mu_{\A}$ is weak Bernoulli.
			
			\item
			$\mu_{\A }$ has exponential decay of correlations for H\"older continuous functions. That is for a fixed $\theta \in (0,1)$ there are constants $D$ and $\gamma \in (0,1)$ such that
			\[ \abs{\int f\cdot g \circ \sigma^{n}d\mu_{\A} - \int f d\mu_{\A} \int gd\mu_{\A}}\leq D\norm{f}_{\theta}\norm{g}_{\theta}\gamma^{n} \]
			for all $f,g\in \H_{\theta}$, $n \geq 0$. In addition, the rate $\gamma$ is determined by $\theta$ and the eigenvalues of $A$.
		\end{enumerate}
	\end{enumerate}  
\end{theorem}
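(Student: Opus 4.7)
The plan is to construct $\mu_{\A}$ explicitly via Perron--Frobenius theory applied to $A = \sum_{i} A_{i}$ and then read off every ergodic property from the spectral behaviour of $A$. By $K$-irreducibility, $\rho(A)$ is a simple eigenvalue of $A$ with Perron eigenvector $u \in \interior(K)$; likewise $A^{*}$ has Perron eigenvector $v \in \interior(K^{*})$, normalised so that $\inn{u,v}=1$. I would define
\[ \mu_{\A}[x_{0}\cdots x_{n-1}] := \rho(A)^{-n}\inn{A_{x_{0}}A_{x_{1}}\cdots A_{x_{n-1}}u,v} \]
on cylinders. Consistency under adjoining a letter on the right uses $Au=\rho(A)u$, and shift-invariance under adjoining a letter on the left uses $A^{*}v=\rho(A)v$, so Kolmogorov's extension theorem delivers a shift-invariant Borel probability measure on $\Sigma^{\Z}$.

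The Gibbs property then reduces to the elementary cone estimate
\[ C^{-1}\norm{B} \leq \inn{Bu,v} \leq C\norm{B} \]
for every $B$ preserving $K$, with $C$ depending only on $K$, $u$, $v$. The upper bound is $\inn{Bu,v}\leq\norm{B}\norm{u}\norm{v}$; the lower uses that $u\in\interior(K)$ dominates (in cone order, up to a multiplicative constant) any cone-decomposition of a unit vector, combined with $\inn{y,v}\geq \eta\norm{y}$ for $y\in K$, a reformulation of $v\in\interior(K^{*})$. Applied to $B=A_{x_{0}}\cdots A_{x_{n-1}}$ this yields \eqref{eq:GibbsInequalityMatrices} with $P=\log\rho(A)$, and summing over cylinders gives $P(\A,1)=\log\rho(A)$.

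For the ergodic theory, a direct computation using $\sum_{\abs{z}=n}A_{z}=A^{n}$ gives, for cylinders $E=[x_{0}\cdots x_{s-1}]$ and $F=[y_{0}\cdots y_{r-1}]$,
\[ \mu_{\A}(E \cap \sigma^{-(s+n)}F) = \rho(A)^{-(s+r+n)}\inn{A_{x_{0}}\cdots A_{x_{s-1}} A^{n} A_{y_{0}}\cdots A_{y_{r-1}}u, v}. \]
Writing $\rho(A)^{-n}A^{n} = P_{0} + R_{n}$ with $P_{0}=u\otimes v$ the rank-one Perron projection, the substitution $\rho(A)^{-n}A^{n}\leadsto P_{0}$ recovers exactly $\mu_{\A}(E)\mu_{\A}(F)$. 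In the irreducible case $R_{n}\to 0$ in Cesaro average (Perron--Frobenius), giving ergodicity, and hence uniqueness among $1$-Gibbs states by the uniform Gibbs comparison. In the $K$-primitive case $\norm{R_{n}}\leq C_{0}\gamma_{0}^{n}$ with $\gamma_{0}<1$ determined by the sub-dominant eigenvalues of $A$.

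For weak Bernoulli in the primitive case, the remainder is bounded by $C\norm{R_{n}}\cdot(\rho(A)^{-s}\norm{A_{x_{0}}\cdots A_{x_{s-1}}})(\rho(A)^{-r}\norm{A_{y_{0}}\cdots A_{y_{r-1}}})$, and the double sum over cylinders is controlled by $\sum_{\abs{I}=s}\rho(A)^{-s}\norm{A_{I}}\leq C$, an immediate consequence of the Gibbs inequality. This yields a total bound $\leq C'\gamma_{0}^{n}<\e$ once $n\geq N(\e) = O(\log(1/\e))$. Exponential decay of correlations for H\"older observables then follows by the standard device of approximating $f,g\in\H_{\theta}$ by their conditional expectations on the first $\ell$ coordinates and optimising in $\ell$, yielding a rate $\gamma=\gamma(\theta,\gamma_{0})$. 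I expect the main obstacle to be the cone estimate $\norm{B}\asymp\inn{Bu,v}$, which needs careful bookkeeping in the absence of a canonical positive basis for $K$; the remainder of the proof is essentially a matrix transliteration of the scalar transfer-operator argument, with $A$ playing the role of the transfer operator and $\interior(K)\times\interior(K^{*})$ playing the role of the positive cone of functions.
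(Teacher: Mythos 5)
Your proposal is correct and follows essentially the same route as the paper: the explicit construction $\mu_{\A}[x_{0}\cdots x_{n-1}]=\rho(A)^{-n}\inn{A_{x_{0}}\cdots A_{x_{n-1}}u,v}$ from the Perron eigenvectors, the cone estimate $C^{-1}\norm{B}\leq\inn{Bu,v}\leq C\norm{B}$ for the Gibbs inequality (the paper's Lemma \ref{approxnormlemma}, proved there by compactness rather than your domination argument, but the same key lemma), Ces\`aro convergence of $\rho(A)^{-n}A^{n}$ for ergodicity, the spectral-gap decomposition $\rho(A)^{-n}A^{n}=uv^{T}+R_{n}$ with the double sum controlled by $\sum_{I,J}\mu([I])\mu([J])=1$ for weak Bernoulli, and the standard approximation argument from Bowen for H\"older decay of correlations. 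No gaps.
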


For the Kusuoka measure, part 2(b) is known \cite{MR3667706}, however our proof is fundamentally different and significantly more elementary. In particular the method in \cite{MR3667706} uses the $g$-function for the Kusuoka measure and transfer operator techniques. This is technically challenging largely due to the fact that the $g$-function can fail to be continuous.

We can explicitly construct the measure $\mu_{\A}$. As $A$ is irreducible we may take $u,v$ to be right and left eigenvectors respectively corresponding to the spectral radius $\rho(A)$ with $\inn{u,v}=1$. On cylinder sets we define
\begin{equation}\label{eq:DefMeasure}
\mu_{\A}[x_{0}x_{1}\cdots x_{n-1}]=\rho(A)^{-n}\inn{A_{x_{0}}A_{x_{1}}\cdots A_{x_{n-1}} u ,v}. 
\end{equation}
Using the fact that $u,v$ are eigenvectors for $A$ it is readily checked that
\[ \sum_{i}\mu_{\A}[ix_{0}\cdots x_{n-1}]=\mu_{\A}[x_{0}\cdots x_{n-1}]=\sum_{i}\mu_{\A}[x_{0}\cdots x_{n-1}i]. \]
As cylinder sets form a semi-algebra Carath\'eodory's extension theorem implies that this extends to a shift invariant measure on $\S^{\Z}$. Next our goal is to show that this is a 1-Gibbs state for $\A$ and that it is unique. To do so, we prove the following proposition.

\begin{proposition}\label{BasicPropertiesMesaureFullShift}
	Suppose that $\A=(A_{0},\ldots, A_{M-1})\in M_{d}(\R)^{M}$ is such that each $A_{i}$ is non-negative with respect to a cone $K$ and $A:=\sum_{i}A_{i}$ is $K$-irreducible. Then
	\begin{enumerate}
		\item 
		$\mu_{\A}$ is ergodic.
		
		\item
		$\mu_{\A}$ satisfies the Gibbs inequality \eqref{eq:GibbsInequalityMatrices} with $P=\log \rho(A)$.
	\end{enumerate}
	
	\begin{proof}
		\begin{enumerate}
			\item 
			Observe that
			\begin{equation}\label{eq:powersofsum}
			A^{n}=\left(\sum_{i}A_{i}\right)^{n}= \sum_{\abs{K}=n}A_{K}. 
			\end{equation}
			Let $I,J$ be words.
			\begin{align*}
			&\abs{\frac{1}{n}\sum_{k=1}^{n}\mu_{\A}([I] \cap \sigma^{-k}[J])-\mu_{\A}([I])\mu_{\A}([J])}\\
			&\leq\abs{\frac{1}{n}\sum_{k=1}^{\abs{I}}\mu_{\A}([I] \cap \sigma^{-k}[J])}\\
			&\phantom{=}\,+\rho(A)^{-\abs{I}-\abs{J}} \abs{\inn{A_{I}\left(\frac{1}{n}\sum_{k=\abs{I}+1}^{n}\rho(A)^{\abs{I}-k}A^{k-\abs{I}}\right)A_{J}u,v}-\inn{A_{I}u,v}\inn{A_{J}u,v}}\\
			&\xrightarrow{n\to \infty}0+\rho(A)^{-\abs{I}-\abs{J}}\abs{\inn{A_{I}\inn{A_{J}u,v}u,v}-\inn{A_{I}u,v}\inn{A_{J}u,v}}=0
			\end{align*}
			 by the Perron-Frobenius theorem \ref{PFThmCones} 2(b). As cylinder sets are a generating semi-algebra this implies that $\mu_{\A}$ is ergodic.
			
			\item
			From the Perron-Frobenius theorem we have that $u \in \interior(K)$, $v \in \interior(K^{\ast})$. Thus the Gibbs inequality follows directly from an application of lemma \ref{approxnormlemma}.
		\end{enumerate}
	\end{proof}
\end{proposition}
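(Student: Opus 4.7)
The plan is to verify both claims by exploiting the explicit formula \eqref{eq:DefMeasure} for $\mu_{\A}$ together with the Perron-Frobenius theorem for $K$-irreducible matrices. For part (1), I would use the standard characterization that a shift-invariant measure is ergodic if and only if the Ces\`aro averages
\[ \frac{1}{n}\sum_{k=1}^{n}\mu_{\A}([I]\cap \sigma^{-k}[J])\to \mu_{\A}([I])\mu_{\A}([J]) \]
hold for all cylinders $[I]$, $[J]$, since cylinders form a generating semi-algebra.

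First I would compute $\mu_{\A}([I]\cap \sigma^{-k}[J])$ directly for $k\geq \abs{I}$: the intersection is the disjoint union over gap words $K$ of length $k-\abs{I}$ of the cylinders $[IKJ]$, so \eqref{eq:DefMeasure} combined with \eqref{eq:powersofsum} yields
\[ \mu_{\A}([I]\cap \sigma^{-k}[J]) = \rho(A)^{-(k+\abs{J})}\inn{A_{I}A^{k-\abs{I}}A_{J}u,v}. \]
The finitely many terms with $k<\abs{I}$ contribute $O(1/n)$ to the Ces\`aro average and vanish in the limit. Pulling $A_{I}$ and $A_{J}$ outside the sum reduces the problem to the Ces\`aro convergence of $\rho(A)^{-m}A^{m}$ to the rank-one Perron projection $u\inn{\cdot,v}$, which is precisely what $K$-irreducibility provides via the Perron-Frobenius theorem for cones. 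Substituting in and using $\mu_{\A}([I])=\rho(A)^{-\abs{I}}\inn{A_{I}u,v}$ gives exactly $\mu_{\A}([I])\mu_{\A}([J])$.

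For part (2), the Gibbs inequality with $t=1$ and $P=\log\rho(A)$ reduces, after cancelling $\rho(A)^{-\abs{I}}$, to the two-sided uniform comparison
\[ C^{-1}\inn{A_{I}u,v}\leq \norm{A_{I}}\leq C\inn{A_{I}u,v}. \]
Each $A_{I}$ preserves $K$ since each $A_{i}$ does, and $u\in\interior(K)$, $v\in\interior(K^{\ast})$ by the Perron-Frobenius theorem. This is exactly the content of the appendix lemma \ref{approxnormlemma}, which compares $\inn{Bu,v}$ with $\norm{B}$ whenever $B$ preserves $K$ and the test vectors lie in the cone interiors.

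The main obstacle is invoking the Perron-Frobenius machinery with exactly the right hypothesis: $K$-irreducibility gives only Ces\`aro convergence of $\rho(A)^{-n}A^{n}$, not convergence of the sequence itself, which is why one can conclude ergodicity here but not mixing. The stronger $K$-primitivity hypothesis, deployed later in Theorem \ref{Section2thm}(2), is what upgrades this to exponential convergence and, in turn, to the weak Bernoulli and decay of correlations conclusions.
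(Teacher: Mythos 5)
Your proposal is correct and follows essentially the same route as the paper: splitting the Ces\`aro average at $k=\abs{I}$, rewriting the gap sum as $A^{k-\abs{I}}$ via \eqref{eq:powersofsum}, invoking the Ces\`aro convergence of $\rho(A)^{-m}A^{m}$ to the Perron projection from Theorem \ref{PFThmCones} 2(b), and deducing the Gibbs inequality from $u\in\interior(K)$, $v\in\interior(K^{\ast})$ together with Lemma \ref{approxnormlemma}. Your closing remark correctly identifies why irreducibility yields only ergodicity here while primitivity is needed for the mixing statements later in the paper.
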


As ergodic measures are mutually singular this implies that $\mu_{\A}$ is the unique $1$-Gibbs state for $\A$. The proof of the previous lemma shows that mixing properties of $\mu_{\A}$ are related to the convergence of $A^{n}$. It is this fact that we will exploit to prove the remaining assertions in theorem \ref{Section2thm}.

\begin{proposition}\label{weakbernoulli}
	Suppose that $\A=(A_{0},\ldots, A_{M-1})\in M_{d}(\R)^{M}$ is such that each $A_{i}$ is non-negative with respect to a cone $K$ and $A:=\sum_{i}A_{i}$ if $A$ is $K$-primitive then the measure $\mu_{\A}$ is weak Bernoulli. 
	\begin{proof}
		Let $r,s\geq 1$, $t \geq s$ and take $[I] \in \bigvee_{i=0}^{s-1}\sigma^{-i}\P$ and $[_{t}J]\in \bigvee_{i=t}^{t+r-1}\sigma^{-i}\P$. Notice
		\begin{align*}
		&\abs{\mu_{\A}([I] \cap [_{t}J])-\mu_{\A}([I])\mu_{\A}([J])}\\
		&=\abs{\sum_{\abs{K}=t-s}\mu_{\A}([IKJ])-\mu_{\A}([I])\mu_{\A}([J])}\\
		&=\abs{\sum_{\abs{K}=t-s}\rho(A)^{-(s+r+(t-s))}\inn{A_{I}A_{K}A_{J}u,v}-\rho(A)^{-(s+r)}\inn{A_{I}u,v}\inn{A_{J}u,v}}\\
		&=\rho(A)^{-(s+r)}\abs{\inn{A_{I}\left(\rho(A)^{-(t-s)}\sum_{\abs{K}=t-s}A_{K}\right)A_{J}u,v}-\inn{A_{I}u,v}\inn{A_{J}u,v}} 
		\end{align*}
		Notice that 
		\[ \rho(A)^{-(t-s)}\sum_{\abs{K}=t-s}A_{K}=\rho(A)^{-(t-s)}A^{t-s}= uv^{T} + (\rho(A)^{-(t-s)}A^{t-s}-uv^{T}). \] 
		Thus
		\begin{align*}
		&\abs{\mu_{\A}([I] \cap [_{t}J])-\mu_{\A}([I])\mu_{\A}([J])}\\
		&=\rho(A)^{-(s+r)}\abs{\inn{A_{I}(\rho(A)^{-(t-s)}A^{t-s}-uv^{T})A_{J}u,v}}\\
		&\leq \rho(A)^{-(s+r)}\norm{A_{I}^{\ast}v}\norm{A_{J}u}\norm{\rho(A)^{-(t-s)}A^{t-s}-uv^{T}}\\
		&\leq C \beta^{t-s}\rho(A)^{-s}\norm{A_{I}}\rho(A)^{-r}\norm{A_{J}}\\
		&\leq C'\beta^{t-s}\mu_{\A}(I)\mu_{\A }(J) \text{ by Proposition \ref{BasicPropertiesMesaureFullShift}}
		\end{align*}
		where $\beta=\frac{\abs{\lambda_{2}}+\e}{\rho(A)}<1$ for a small $\e>0$ as in Perron-Frobenius theorem \ref{PFThmCones}. Then we have
		\begin{align*}
		\sum_{I ,J}\abs{\mu_{\A}([I] \cap [_{t}J])-\mu_{\A}([I])\mu_{\A}([J])}\leq K\beta^{t-s}\sum_{I,J}\mu_{\A}([I])\mu_{\A}([J])=K\beta^{t-s}.
		\end{align*}
		Hence $\mu_{\A}$ is weak Bernoulli.
	\end{proof}
\end{proposition}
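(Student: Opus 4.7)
The plan is to leverage the explicit formula $\mu_\A[x_0\cdots x_{n-1}] = \rho(A)^{-n}\langle A_{x_0}\cdots A_{x_{n-1}}u,v\rangle$ from \eqref{eq:DefMeasure} in tandem with the exponential convergence $\rho(A)^{-n}A^n \to uv^T$ guaranteed by the Perron–Frobenius theorem for $K$-primitive matrices. Fix words $I$ of length $s$ and $J$ of length $r$ and decompose $[I]\cap[_tJ] = \bigsqcup_{|K|=t-s}[IKJ]$. Combining this with the identity $A^{t-s}=\sum_{|K|=t-s}A_K$ from \eqref{eq:powersofsum}, I would first rewrite
\[ \mu_\A([I]\cap[_tJ]) = \rho(A)^{-(s+r+t-s)}\langle A_I A^{t-s}A_J u,v\rangle. \]

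Next I would split $\rho(A)^{-(t-s)}A^{t-s} = uv^T + R_{t-s}$. The rank-one piece $uv^T$ exactly reproduces the product $\mu_\A([I])\mu_\A([J])$ (using $\langle u,v\rangle=1$ and that $u,v$ are Perron eigenvectors), so the weak Bernoulli discrepancy reduces to the residual
\[ \bigl|\mu_\A([I]\cap[_tJ]) - \mu_\A([I])\mu_\A([J])\bigr| = \rho(A)^{-(s+r)}\bigl|\langle A_I R_{t-s}A_J u,v\rangle\bigr|. \]
Applying the operator-norm estimate $|\langle A_I R_{t-s}A_J u,v\rangle|\leq \|A_I^*v\|\|A_J u\|\|R_{t-s}\|$ together with the spectral gap bound $\|R_{t-s}\|\leq C\beta^{t-s}$, where $\beta = (|\lambda_2|+\e)/\rho(A)<1$, I obtain control proportional to $\beta^{t-s}\bigl(\rho(A)^{-s}\|A_I\|\bigr)\bigl(\rho(A)^{-r}\|A_J\|\bigr)$. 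Invoking the Gibbs inequality established in Proposition \ref{BasicPropertiesMesaureFullShift} converts $\rho(A)^{-s}\|A_I\|$ into a constant multiple of $\mu_\A([I])$, and similarly for $J$, yielding
\[ \bigl|\mu_\A([I]\cap[_tJ])-\mu_\A([I])\mu_\A([J])\bigr|\leq C'\beta^{t-s}\mu_\A([I])\mu_\A([J]). \]

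Summing over all length-$s$ words $I$ and length-$r$ words $J$ collapses the right side to $C'\beta^{t-s}$, which can be made smaller than any prescribed $\e>0$ by choosing $t-s\geq N$ for $N$ sufficiently large; the bound is uniform in $r,s$, which is exactly what weak Bernoulli requires. The only genuinely delicate input is the exponential spectral-gap estimate $\|\rho(A)^{-n}A^n-uv^T\|\leq C\beta^n$, which is precisely the content of the cone Perron–Frobenius theorem under the $K$-primitivity hypothesis. Note that the weaker Cesàro convergence available under mere irreducibility would only yield ergodicity (as in Proposition \ref{BasicPropertiesMesaureFullShift}), not the summable estimate needed here — this is why primitivity rather than irreducibility is essential for the argument.
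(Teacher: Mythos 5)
Your proposal is correct and follows exactly the paper's argument: the same cylinder decomposition over fill words $K$, the same splitting $\rho(A)^{-(t-s)}A^{t-s}=uv^{T}+R_{t-s}$ with the spectral-gap bound from Theorem \ref{PFThmCones}(3), the same conversion to $\mu_{\A}([I])\mu_{\A}([J])$ via the Gibbs inequality of Proposition \ref{BasicPropertiesMesaureFullShift}, and the same summation step. Your closing observation contrasting primitivity with the Ces\`aro convergence available under mere irreducibility is a correct and worthwhile clarification, but the proof itself is the paper's.
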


Thus we have proven theorem \ref{Section2thm} 2(a), part 2(b) follows by an approximation argument, see Bowen's book \cite[theorem 1.26]{bowen1975equilibrium}. Finally we end this section with an example which shows that $k$-Gibbs states can be understood in terms of matrices preserving a common cone, for $k$ an even integer.

\begin{example}\label{GeneralizedKusoukaMeasure}
	The following example generalizes the Kusuoka measure (the Kusuoka measure is the case of $k=2$). Let $k$ be an even integer and define
	\[ S=\spn\set{v^{\otimes k}:v \in \R^{d}} \]
	We consider the following cone in $S^{\ast}$
	\[ K=\set{w\in S^{\ast}:\inn{v^{\otimes k},w}_{(\R^{d})^{\otimes k}}\geq 0\text{ for all } v\in \R^{d}} \]
	Note that when $k$ is odd this set is $\set{0}$. When $k$ is even $K$ is a cone with non-void interior (see proposition \ref{PSDTCProp}). The cone $K$ is sometimes referred to as the positive semi-definite tensor cone: in the case of $k=2$ this cone can be identified with positive semi-definite matrices. Suppose that $\A=(A_{0},\ldots , A_{M-1})$ is a collection of matrices with no common proper, non-trivial invariant subspace. Consider the collection $\A'=((A_{0}^{\otimes k})^{\ast}, \ldots , (A_{M-1}^{\otimes k})^{\ast})$. The collection $\A'$ preserves the cone $K$. We claim that in fact $A= \sum_{i}(A_{i}^{\otimes k})^{\ast}$ is irreducible with respect to $K$. To prove this it is enough to show that no eigenvector of $A$ lies on the boundary of $K$ \cite[theorem 4.1]{MR0244284}. Suppose that $w\in K$, $w \neq 0$ and that $Aw=\lambda w$ and define
	\[ W=\spn\set{u: \inn{u^{\otimes k},w}_{(\R^{d})^{\otimes k}}=0} \]
	We claim that $W$ is invariant under $\A$, if $\inn{u^{\otimes k},w}_{(\R^{d})^{\otimes k}}=0$ then
	\begin{align*}
	0=\inn{u^{\otimes k}, Aw}_{(\R^{d})^{\otimes k}}=\sum_{i}\inn{(A_{i}u)^{\otimes k}, w}_{(\R^{d})^{\otimes k}}
	\end{align*}
	as $w \in K$ this implies that $\inn{(A_{i}u)^{\otimes k}, w}_{(\R^{d})^{\otimes k}}=0$ for each $i$. Thus $W$ is $\A$ invariant, so it is either $\R^{d}$ or $\set{0}$. As $w \neq 0$ we must have that $W=\set{0}$. Therefore $w\in \interior(K)$ by lemma \ref{ConeInteriorLemma} and $A$ is irreducible. Constructing the $1$-Gibbs state for $\A'$, we see that it satisfies the Gibbs inequality: there exist constants $C>0$ and $P$ such that
	\[  C^{-1}\mu_{\A'}([x_{0}\cdots x_{n-1}])\leq e^{-nP}\norm{(A_{x_{0}}^{\otimes k})^{\ast}(A_{x_{1}}^{\otimes k})^{\ast}\cdots (A_{x_{n-1}}^{\otimes k})^{\ast}}\leq C \mu_{\A'}([x_{0}\cdots x_{n-1}]). \]
	As $A_{x_{n-1}}^{\otimes k}A_{x_{n-2}}^{\otimes k}\cdots A_{x_{0}}^{\otimes k}= (A_{x_{n-1}}A_{x_{n-2}}\cdots A_{x_{0}})^{\otimes k}$ we have that 
	\[ C^{-1}\mu_{\A'}([x_{0}\cdots x_{n-1}])\leq e^{-nP}\norm{A_{x_{n-1}}A_{x_{n-2}}\cdots A_{x_{0}}}^{k}\leq C \mu_{\A'}([x_{0}\cdots x_{n-1}]). \]
	Strictly speaking the order of the product of matrices is backwards from the Gibbs inequality in equation \eqref{eq:GibbsInequalityMatrices}. By taking $\A= (A_{0}^{\ast}, \ldots , A_{M-1}^{\ast})$ this can be changed (see proposition \ref{irreducibleProp}). Thus we have found an elementary way of constructing $k$-Gibbs states for all even integers.
\end{example}

Therefore we have a completely explicit description of Gibbs states when $t$ is an even integer. 

\section{Transfer operators and exponential mixing}\label{sec:fastmixing}

The goal of this section is to explore a method for constructing matrix Gibbs states and proving ergodic and statistical properties using transfer operators.  This approach is interesting for number of reasons in particular it is an application of transfer operator methods to a problem in sub-additive ergodic theory. It is also a reasonable generalization of example \ref{GeneralizedKusoukaMeasure} using operators on infinite dimensional spaces. We will need the following definitions.

\begin{definition}
	We say that a collection of invertible $d \times d$ matrices $(A_{0},\ldots , A_{M-1})$ is \emph{strongly irreducible} if they do not preserve a finite union of proper and nontrivial subspaces. 
\end{definition}

\begin{definition}
	An element $B \in M_{d}(\R)$ is called \emph{proximal} if $B$ has a simple eigenvalue of modulus $\rho(B)$ and any other eigenvalue has modulus strictly smaller then $\rho(B)$. The collection $(A_{0},\ldots , A_{M-1})$ is called \emph{proximal} if there exists a product $B=A_{x_{0}}\cdots A_{x_{n}}$ that is proximal.
\end{definition}

We have the following theorem.

\begin{theorem}\label{ExpMixingThm}
		Suppose that $\A=(A_{0},\ldots A_{M-1})$ is a collection of real invertible $d \times d$ matrices which is proximal and strongly irreducible. Then for any $t\geq0$ there exists a unique Gibbs state for $(\A,t)$, $\mu_{\A,t}$, moreover
		\begin{enumerate}
			\item 
			$\mu_{\A,t}$ is weak Bernoulli.
			
			\item
			$\mu_{\A ,t}$ has exponential decay of correlations for H\"older continuous functions. That is for a fixed $\theta \in (0,1)$ there are constants $D$ and $\gamma \in (0,1)$ such that
			\[ \abs{\int f\cdot g \circ \sigma^{n}d\mu_{\A,t} - \int f d\mu_{\A,t} \int gd\mu_{\A,t}}\leq D\norm{f}_{\theta}\norm{g}_{\theta}\gamma^{n} \]
			for all $f,g\in \H_{\theta}$, $n \geq 0$.
		\end{enumerate}
\end{theorem}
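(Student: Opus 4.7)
The plan is to construct $\mu_{\A,t}$ explicitly through a transfer operator on H\"older continuous functions on projective space $\RP^{d-1}$, then repeat the argument of Proposition \ref{weakbernoulli} almost verbatim using the spectral gap of this operator. Define the family of operators $L_I$ acting on $C(\RP^{d-1})$ by
$$(L_I f)(\bar v) := \frac{\|A_I v\|^t}{\|v\|^t} \, f(\overline{A_I v}),$$
which is well-defined because the weight is $0$-homogeneous in $v$. A direct computation shows $L_J L_K = L_{KJ}$, so $L := \sum_i L_i$ satisfies $L^n = \sum_{|I|=n} L_I$. Under strong irreducibility and proximality, the classical theorem of Le Page (see also Bougerol--Lacroix or Guivarc'h--Raugi) supplies a spectral gap: for some $\alpha>0$, $L$ acts on $\H_\alpha(\RP^{d-1})$ with a simple leading eigenvalue $\lambda = e^{P(\A,t)}$, a strictly positive $\alpha$-H\"older eigenfunction $h$, and the rest of the spectrum inside a disk of radius $\kappa\lambda$ with $\kappa<1$; dually, let $\nu$ be the unique probability measure with $L^*\nu = \lambda\nu$, normalized so $\int h\,d\nu=1$, and let $\Pi g := h\cdot\int g\,d\nu$ denote the rank-one spectral projection.

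With this in hand, define on cylinders
$$\mu_{\A,t}[x_0 \cdots x_{n-1}] := \lambda^{-n} \int (L_{x_0 \cdots x_{n-1}} h)(\bar v) \, d\nu(\bar v).$$
Consistency at both ends of the cylinder reduces to $Lh=\lambda h$ and $L^*\nu=\lambda\nu$, exactly as in equation \eqref{eq:DefMeasure}, so Carath\'eodory extends $\mu_{\A,t}$ to a shift-invariant probability measure. The Gibbs property with $P=\log\lambda$ follows from: (i) $h$ bounded away from $0$ and $\infty$; (ii) the pointwise upper bound $\|A_I v\|^t/\|v\|^t \leq \|A_I\|^t$; and (iii) the classical regularity of the Furstenberg measure $\nu$ under strong irreducibility, which supplies a uniform lower bound for the $\nu$-mass of directions $\bar v$ on which $\|A_I v\|$ is comparable to $\|A_I\|\|v\|$.

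Weak Bernoulli and exponential mixing then follow along the lines of Proposition \ref{weakbernoulli}. Taking $r,s \geq 1$, offset $n \geq s$ (renamed to avoid clash with the exponent $t$), and cylinders $[I]\in\bigvee_{i=0}^{s-1}\sigma^{-i}\P$, $[_n J]\in\bigvee_{i=n}^{n+r-1}\sigma^{-i}\P$, one uses $L_J L_K L_I = L_{IKJ}$ together with the decomposition $\lambda^{-m}L^m = \Pi + Q_m$, where $\|Q_m\|_{\H_\alpha \to \H_\alpha} \leq C\kappa^m$, to obtain
$$\mu_{\A,t}([I]\cap[_n J]) - \mu_{\A,t}[I]\,\mu_{\A,t}[J] = \lambda^{-(s+r)} \int L_J\,Q_{n-s}(L_I h) \, d\nu,$$
which is bounded by $C\kappa^{n-s}\mu_{\A,t}[I]\mu_{\A,t}[J]$ once one controls $\|L_I h\|_\alpha \lesssim \|A_I\|^t$. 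Summing over $I,J$ gives weak Bernoulli; uniqueness follows from ergodicity by the mutual singularity of distinct ergodic measures; and exponential decay of correlations for $\H_\theta$ observables is the standard cylinder-approximation argument of Bowen \cite[theorem 1.26]{bowen1975equilibrium}.

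The main obstacle is the spectral gap for $L$ on $\H_\alpha(\RP^{d-1})$. The underlying Doeblin--Fortet inequality uses proximality to produce iterates of $L$ whose weights concentrate projective mass into a small neighborhood of the top eigenline of some proximal product $A_{I_0}$, and strong irreducibility to forbid this mass from becoming trapped on a proper projective subvariety. A secondary technical point is the H\"older bound $\|L_I h\|_\alpha \lesssim \|A_I\|^t$: the naive derivative estimate for the projective action involves the condition number $\|A_I\|\|A_I^{-1}\|$, but the interpolation $\|g\|_\alpha \leq 2\|g\|_\infty^{1-\alpha}\|g\|_{\mathrm{Lip}}^\alpha$ together with a sufficiently small choice of $\alpha$ (depending on the Lyapunov spectrum of $\A$) absorbs the condition number into an overall $\|A_I\|^t$ bound.
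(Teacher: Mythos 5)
Your overall architecture coincides with the paper's: the same weighted transfer operator $L_{t}$ on $\RP^{d-1}$, the same cylinder formula for $\mu_{\A,t}$, the spectral gap imported from Guivarc'h--Le Page (theorem \ref{PFThm}), the identical $L_{J}\left(\rho(L)^{-n}L^{n}\right)L_{I}$ decomposition for the correlation of two cylinders, and Bowen's approximation argument for part (2). The measure construction, the Gibbs inequality via strict positivity of $A\mapsto\int\norm{A\frac{u}{\norm{u}}}^{t}d\nu_{t}$ on norm-one matrices, and the weak Bernoulli summation are all as in proposition \ref{GibbsStatesTOconstruction} and the paper's proof.

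The one step where you depart from the paper is also the one that fails: your proposed proof of $\norm{L_{I}h_{t}}_{\alpha}\lesssim\norm{A_{I}}^{t}$. The interpolation $\norm{g}_{\alpha}\leq 2\norm{g}_{\infty}^{1-\alpha}\norm{g}_{\mathrm{Lip}}^{\alpha}$ applied to $g=L_{I}h_{t}$ yields at best $\norm{L_{I}h_{t}}_{\alpha}\lesssim\norm{A_{I}}^{t}\left(\norm{A_{I}}\norm{A_{I}^{-1}}\right)^{\alpha}$, and the condition number $\norm{A_{I}}\norm{A_{I}^{-1}}$ is unbounded over words $I$ for every fixed $\alpha>0$: it grows exponentially along powers of any proximal product, and proximality guarantees such products exist. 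No choice of $\alpha$ ``depending on the Lyapunov spectrum'' removes this, because the weak Bernoulli sum ranges over all cylinders $J$ of every length $r$, so an extra factor $e^{c\alpha r}$ destroys exactly the uniformity in $r$ that the definition demands. The correct mechanism (lemma \ref{regularitylemma}, following Guivarc'h--Le Page) is a pointwise cancellation rather than a global interpolation: the Lipschitz estimate for the projective action is $d(\ol{A_{I}u},\ol{A_{I}w})\leq \frac{2\norm{A_{I}}}{\norm{A_{I}\frac{u}{\norm{u}}}}d(\ol{u},\ol{w})$, and the weight $\norm{A_{I}\frac{u}{\norm{u}}}^{t}$ is small precisely where this constant is large; taking $\alpha\leq\ol{t}=\min\set{1,t}$, only $\alpha\leq t$ powers of the dangerous ratio appear and they are absorbed into the weight, leaving $\norm{A_{I}\frac{u}{\norm{u}}}^{t-\alpha}\norm{A_{I}}^{\alpha}\leq\norm{A_{I}}^{t}$ with no condition number anywhere. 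With that lemma established, the remainder of your argument goes through exactly as in the paper.
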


In the previous section we have seen that the role of the transfer operator for $t=2k$  was played by $A=\sum_{i}A_{i}^{\otimes 2k}$ we need to find a suitable replacement. By identifying $2$-tensors with bilinear forms which are in turn a subspace of the $2$-homogeneous functions one is naturally lead to consider the action of the matrices on $t$-homogeneous functions. This is then equivalent to the action of the matrices on the projective space $\RP^{d-1}$ weighted by the functions $\norm{A_{i}\frac{u}{\norm{u}}}^{t}$. That is, define a transfer operator by
\begin{equation}\label{eq:TransferOperator}
L_{t}f(\ol{u})= \sum_{i=0}^{M-1}\norm{A_{i}\frac{u}{\norm{u}}}^{t}f(\ol{A_{i}u}) 
\end{equation}
which acts on $C(\RP^{d-1})$. The connection between matrix Gibbs states and this operator is made clear in proposition \ref{GibbsStatesTOconstruction}. First we fix some notation. For a function $h$ and a measure $\nu$ we write
\[ \inn{h,\nu}=\int h d\nu. \]
Recall that $\RP^{d-1}$ is obtained by taking the quotient of $\R^{d}\setminus\set{0}$ by the equivalence relation $x \sim y$ if and only if $x= \lambda y$ for some $\lambda \neq 0$. We denote the equivalence class of a vector $v$ by $\ol{v}$. Define a metric on $\RP^{d-1}$ by
\begin{align*}
d(\ol{u},\ol{w})&=\inf \set{\norm{u'-w'}:\norm{u'}=\norm{w'}=1 \text{ and }\ol{u'}=\ol{u},\ol{w'}=\ol{w}}.
\end{align*}

\begin{proposition}\label{GibbsStatesTOconstruction}
	Let $t\geq 0$ and $\A=(A_{0},\ldots , A_{M-1})$ be a collection of invertible matrices. Suppose that there exists $\nu_{t}$ a Borel probability measure not supported on a projective subspace and $h_{t}$ a strictly positive continuous function such that $L_{t}h_{t}=\rho(L_{t})h_{t}$, $L_{t}^{\ast}\nu_{t}=\rho(L_{t})\nu_{t}$ and $\inn{h_{t},\nu_{t}}=1$. Define $L_{i}$ by $L_{i}f(\ol{u})=\norm{A_{i}\frac{u}{\norm{u}}}^{t}f(\ol{A_{i}u})$ then the formula
	\begin{equation}\label{eq:measuredef}
	\mu_{A ,t}[x_{0}x_{1}\cdots x_{n-1}]= \rho(L_{t})^{-n}\int_{\RP^{d-1}}L_{x_{n-1}}\cdots L_{x_{1}}L_{x_{0}} h_{t}(\ol{u}) d\nu_{t}(\ol{u}) 
	\end{equation}
	extends to a shift invariant measure on $\S^{\Z}$. Moreover $\mu_{\A,t}$ is a Gibbs state for $(\A,t)$.
\end{proposition}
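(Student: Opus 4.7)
My plan is to show that \eqref{eq:measuredef} defines a consistent, non-negative pre-measure on cylinders, apply Carath\'eodory's extension theorem, and then verify the Gibbs inequality with $P=\log\rho(L_{t})$. For consistency, observe that $\sum_{i}L_{i}=L_{t}$ on $C(\RP^{d-1})$. Summing over a prepended symbol,
\[ \sum_{i}\mu_{\A,t}[ix_{0}\cdots x_{n-1}] = \rho(L_{t})^{-(n+1)}\int L_{x_{n-1}}\cdots L_{x_{0}}(L_{t}h_{t})\,d\nu_{t} = \mu_{\A,t}[x_{0}\cdots x_{n-1}], \]
using $L_{t}h_{t}=\rho(L_{t})h_{t}$; summing over an appended symbol,
\[ \sum_{i}\mu_{\A,t}[x_{0}\cdots x_{n-1}i] = \rho(L_{t})^{-(n+1)}\int L_{t}(L_{x_{n-1}}\cdots L_{x_{0}}h_{t})\,d\nu_{t} = \mu_{\A,t}[x_{0}\cdots x_{n-1}], \]
using $L_{t}^{\ast}\nu_{t}=\rho(L_{t})\nu_{t}$. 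Nonnegativity is clear since each $L_{i}$ is a positive operator and $h_{t}>0$, and the total mass is $\inn{h_{t},\nu_{t}}=1$. Since cylinders form a generating semi-algebra for $\S^{\Z}$, Carath\'eodory yields a Borel probability measure, and the two marginal identities above are exactly what is needed to extend the definition to cylinders at arbitrary positions in a shift-invariant way.

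For the Gibbs property, I prove by induction on $n$ the telescoping identity
\[ L_{x_{n-1}}\cdots L_{x_{0}} h_{t}(\ol{u}) = \frac{\norm{A_{x_{0}}A_{x_{1}}\cdots A_{x_{n-1}}u}^{t}}{\norm{u}^{t}}\, h_{t}\bigl(\overline{A_{x_{0}}\cdots A_{x_{n-1}}u}\bigr), \]
which is immediate from $L_{i}f(\ol{u})=\norm{A_{i}u/\norm{u}}^{t}f(\ol{A_{i}u})$ and the cancellation of the intermediate norms. Writing $B:=A_{x_{0}}\cdots A_{x_{n-1}}$ and using $0<\inf h_{t}\leq \norm{h_{t}}_{\infty}<\infty$ on the compact space $\RP^{d-1}$, the cylinder measure is sandwiched as
\[ \rho(L_{t})^{-n}\inf h_{t}\,\Phi(B)\;\leq\;\mu_{\A,t}[x_{0}\cdots x_{n-1}]\;\leq\;\rho(L_{t})^{-n}\norm{h_{t}}_{\infty}\,\Phi(B), \]
where $\Phi(B):=\int(\norm{Bu}/\norm{u})^{t}\,d\nu_{t}(\ol{u})$. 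The upper bound $\Phi(B)\leq\norm{B}^{t}$ is immediate and yields one half of \eqref{eq:GibbsInequalityMatrices}.

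The one substantive difficulty is the matching lower bound $\Phi(B)\geq c\norm{B}^{t}$ for some constant $c>0$ independent of $B$; this is the single place where the hypothesis that $\nu_{t}$ is not supported on a projective subspace enters. The function $B\mapsto \Phi(B)/\norm{B}^{t}$ is continuous on $M_{d}(\R)\setminus\set{0}$ and homogeneous of degree zero, so it restricts to a continuous function on the compact unit sphere $\set{B\in M_{d}(\R):\norm{B}=1}$. For any such $B$, vanishing of $\Phi(B)$ would force $Bu=0$ for $\nu_{t}$-a.e.\ direction $\ol{u}$, concentrating $\nu_{t}$ on the proper projective subspace $\mathbb{P}(\ker B)$, contradicting the hypothesis. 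Hence $\Phi(\cdot)/\norm{\cdot}^{t}$ attains a positive minimum $c>0$ on the sphere, which supplies the missing lower bound and completes the verification of \eqref{eq:GibbsInequalityMatrices} with $P=\log\rho(L_{t})$.
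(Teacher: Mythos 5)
Your proof is correct and takes essentially the same approach as the paper: consistency of the cylinder pre-measure via the eigenvalue equations for $h_{t}$ and $\nu_{t}$, the telescoping identity for $L_{x_{n-1}}\cdots L_{x_{0}}h_{t}$, and the Gibbs bounds from the continuity and strict positivity of $B\mapsto\int\norm{B\frac{u}{\norm{u}}}^{t}d\nu_{t}(\ol{u})$ on the compact sphere of norm-one matrices, with the non-support-on-a-projective-subspace hypothesis supplying positivity. You simply spell out the marginal consistency and the positivity argument in more detail than the paper does.
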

\begin{proof}
	The assumption that $h_{t},\nu_{t}$ are eigenvectors corresponding to $\rho(L_{t})$ implies that the formula in \eqref{eq:measuredef} extends to a shift invariant measure by Carath\'eodory's extension theorem. All that remains to be shown is that $\mu_{\A,t}$ satisfies the Gibbs inequality. To see why the Gibbs inequality holds notice
	\[ A \mapsto \int_{\RP^{d-1}}\norm{A\frac{u}{\norm{u}}}^{t}d\nu_{t}(\ol{u}) \]
	is continuous and strictly positive (by the assumption that 
	$\nu_{t}$ is not supported on a projective subspace) from the set of norm one $d \times d$ matrices to $\R$. Take $C>0$ such that 
	\[ \int_{\RP^{d-1}}\norm{A\frac{u}{\norm{u}}}^{t}d\nu(\ol{u}) \geq C\norm{A}^{t} \]
	for all $A \in M_{d}(\R)$. Thus
	\begin{align*}
	\rho(L)^{-n}\inn{L_{x_{n-1}}\cdots L_{x_{1}}L_{x_{0}}h,\nu}\geq (\inf h) C \rho(L)^{-n}\norm{A_{x_{0}}A_{x_{1}} \cdots A_{x_{n-1}} }^{t}
	\end{align*}
	and
	\begin{align*}
	\rho(L)^{-n}\inn{L_{x_{n-1}}\cdots L_{x_{1}}L_{x_{0}}h,\nu}\leq (\sup h) \rho(L)^{-n}\norm{A_{x_{0}}A_{x_{1}} \cdots A_{x_{n-1}} }^{t}.
	\end{align*}
	Which shows that the measure $\mu_{\A,t}$ satisfies the Gibbs inequality.
\end{proof}

If $I=i_{0}i_{1}\cdots i_{n-1}$ we will use the notation that
\[ L_{I}:= L_{i_{n-1}}\cdots L_{i_{1}}L_{i_{0}}. \]
Notice that this is backward from the definition of $A_{I}$. To see why consider
\begin{align*}
L_{x_{1}}L_{x_{0}}f(\ol{u})&= \norm{A_{x_{1}}\frac{u}{\norm{u}}}^{t}L_{x_{0}}f(\ol{A_{x_{1}}u})\\
&=\norm{A_{x_{1}}\frac{u}{\norm{u}}}^{t}\norm{A_{x_{0}}\frac{A_{x_{1}}u}{\norm{A_{x_{1}}u}}}^{t}f(\ol{A_{x_{0}}A_{x_{1}}u})\\
&= \norm{A_{x_{0}}A_{x_{1}}\frac{u}{\norm{u}}}^{t}f(\ol{A_{x_{0}}A_{x_{1}}u}).
\end{align*}
As we can see pre-composition reverses the order of the products.

Operators like $L_{t}$ have appeared frequently in the study of random matrix products. This is however the first time they have been used to construct a measure on $\S^{\Z}$ and deduce ergodic and statistical properties. To prove theorem \ref{ExpMixingThm} all we require is a suitable Perron-Frobenius theorem. For each $\e>0$ denote by $C^{\e}(\RP^{d-1})$ the space of $\e$-H\"older continuous functions in the $d$ metric on $\RP^{d-1}$. This becomes a Banach space in the usual way with norm $\norm{\cdot}_{\e}=\norm{\cdot}_{\infty}+\abs{\cdot}_{\e}$ (where $\abs{f}_{\e}$ is the least $\e$-H\"older constant for $f$). Set $\ol{t}=\min\set{1,t}$. The following theorem is a result of Guivarc'h and Le Page \cite{MR2087783}.

\begin{theorem}[Guivarc'h and Le Page \cite{MR2087783}]\label{PFThm}
	Let $t\geq 0$. Suppose that $(A_{0},\cdots , A_{M-1})$ are real, invertible, strongly irreducible and proximal. Then there exists an $\e$ with $0<\e\leq \ol{t}$ such that the following hold
	\begin{enumerate}
		\item 
		$L_{t}:C^{\e}(\RP^{d-1}) \to C^{\e}(\RP^{d-1})$, that is $L_{t}$ preserves the space of $\e$-H\"older functions.
		
		\item 
		The spectral radius of $L_{t}:C^{\e}(\RP^{d-1})\to C^{\e}(\RP^{d-1})$ is equal to $e^{P(\A,t)}$. That is
		\[ \log \rho(L_{t})=\lim_{n \to \infty}\frac{1}{n}\log \left(\sum_{\abs{I}=n} \norm{A_{I}}^{t}\right)=P(\A,t). \]
		
		\item 
		There exists a unique Borel probability measure $\nu_{t}$ on $\RP^{d-1}$, not supported on a projective subspace, such that $L_{t}^{\ast}\nu_{t}=\rho(L_{t})\nu_{t}$.
		
		\item 
		There exists a unique $\ol{t}$-H\"older function $h_{t}:\RP^{d-1}\to (0,\infty)$ such that $L_{t}h_{t}=\rho(L_{t})h_{t}$ and $\inn{h_{t},v_{t}}=1$.
		
		\item 
		The operator $L_{t}$ has a spectral gap on $C^{\e}(\RP^{d-1})$. That is to say there exists decomposition of $L_{t}$ as $L_{t}=\rho(L_{t})(P_{t}+R_{t})$ where $\rho(R_{t})<1$, $P_{t}R_{t}=R_{t}P_{t}=0$ and
		\[ P_{t}f=\inn{f,\nu_{t}}h_{t} \text{ for all }f \in C^{\e}(\RP^{d-1}). \]
	\end{enumerate}
\end{theorem}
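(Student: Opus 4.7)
The overall strategy is to establish quasi-compactness of $L_{t}$ on $C^{\e}(\RP^{d-1})$ via a Lasota-Yorke inequality combined with the Ionescu-Tulcea-Marinescu theorem, and then upgrade quasi-compactness to a spectral gap by showing that the leading eigenvalue $\rho(L_{t})$ is algebraically simple. The engine of the whole argument, which is also the main obstacle, is a geometric contraction estimate for the projective action $\ol{u}\mapsto \ol{A_{I}u}$ of long random products; once this is in hand, the remaining steps are standard.

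I would start with soft bookkeeping. On $C(\RP^{d-1})$ the operator $L_{t}$ is bounded and positivity-preserving, and from $L_{I}\mathbf{1}(\ol{u}) = \norm{A_{I}u/\norm{u}}^{t}$ together with continuity of $A\mapsto \int \norm{Au/\norm{u}}^{t}\,d\mathrm{vol}(\ol{u})$ on the unit sphere of $M_{d}(\R)$ one obtains
\[ e^{-C}\sum_{\abs{I}=n}\norm{A_{I}}^{t}\leq \norm{L_{t}^{n}\mathbf{1}}_{\infty}\leq \sum_{\abs{I}=n}\norm{A_{I}}^{t}. \]
Gelfand's formula and the definition of $P(\A,t)$ then give item~2. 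For the contraction, I would invoke proximality to pin down a product $A_{J}$ with a simple dominant eigenvalue, attracting direction $\ol{v_{J}^{+}}$, and repelling hyperplane $H_{J}$, so that $d(\ol{A_{J}^{n}u},\ol{v_{J}^{+}})\leq C\beta_{J}^{n}$ uniformly for $\ol{u}$ bounded away from $H_{J}$. Strong irreducibility ensures that no finite union of proper projective subspaces is $\A$-invariant, so by composing $A_{J}^{n}$ with short randomising words one produces an integer $n_{0}$ such that the words $I$ of length $n_{0}$ carrying a definite fraction of $\sum_{\abs{I}=n_{0}}\norm{A_{I}}^{t}$ consist of maps contracting $d$ uniformly on $\RP^{d-1}$ by a factor $\beta<1$. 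Coupled with the $\e$-H\"older regularity of the weights $\ol{u}\mapsto \norm{A_{I}u/\norm{u}}^{t}$, which is where the restriction $\e\leq \ol{t}$ enters, splitting $L_{t}^{n_{0}k}f(\ol{u})-L_{t}^{n_{0}k}f(\ol{w})$ into weight-difference and argument-difference pieces yields the Lasota-Yorke inequality
\[ \abs{L_{t}^{n_{0}k}f}_{\e}\leq C\rho(L_{t})^{n_{0}k}\beta^{\e k}\abs{f}_{\e}+D_{k}\rho(L_{t})^{n_{0}k}\norm{f}_{\infty}. \]
Since $C^{\e}(\RP^{d-1})\hookrightarrow C(\RP^{d-1})$ is compact by Arzel\`a-Ascoli, Ionescu-Tulcea-Marinescu gives quasi-compactness of $\rho(L_{t})^{-1}L_{t}$ on $C^{\e}$; item~1 is immediate from the $k=1$ estimate.

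The eigenmeasure $\nu_{t}$ is produced via Markov-Kakutani applied to the continuous affine self-map $\mu\mapsto L_{t}^{\ast}\mu/\inn{\mathbf{1},L_{t}^{\ast}\mu}$ on the weak$^{\ast}$-compact convex set of Borel probability measures on $\RP^{d-1}$, with eigenvalue identified as $\rho(L_{t})$ by a standard spectral-radius comparison. To rule out support on a projective subspace I would assume, for contradiction, that there is a minimal finite union $V$ of proper projective subspaces with $\nu_{t}(V)=1$ and use $L_{t}^{\ast}\nu_{t}=\rho(L_{t})\nu_{t}$ together with invertibility of the $A_{i}$ to force $\bigcup_{i}\ol{A_{i}V}\subseteq V$ modulo $\nu_{t}$-null sets, producing a finite $\A$-invariant union of proper subspaces and violating strong irreducibility. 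The eigenfunction $h_{t}$ is then extracted as a cluster point of $\rho(L_{t})^{-n}L_{t}^{n}\mathbf{1}$ in $C^{\e}$ (relatively compact by Lasota-Yorke); strict positivity uses invertibility of the $A_{i}$, the $\ol{t}$-H\"older bound follows from the inequality applied to $\mathbf{1}$, and we normalise so that $\inn{h_{t},\nu_{t}}=1$.

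Finally, for uniqueness and the spectral gap (items~3--5), the projective contraction implies that $\rho(L_{t})^{-n}L_{t}^{n}f$ converges uniformly to $\inn{f,\nu_{t}}h_{t}$ for every $f\in C^{\e}(\RP^{d-1})$; applied to any other eigenfunction $g$ for $\rho(L_{t})$ this forces $g=\inn{g,\nu_{t}}h_{t}$, and the dual statement gives uniqueness of $\nu_{t}$. Hence $P_{t}f=\inn{f,\nu_{t}}h_{t}$ is the spectral projector for $\rho(L_{t})$, and quasi-compactness places the remainder of the spectrum of $\rho(L_{t})^{-1}L_{t}$ strictly inside the unit disk, giving item~5 with $\rho(R_{t})<1$. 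As emphasised above, the real work is the projective contraction step: quantitatively passing from the abstract hypotheses of strong irreducibility and proximality to a uniform H\"older contraction for $L_{t}^{n_{0}}$ is precisely the non-trivial content of the Guivarc'h-Le Page theorem and is the only place where those hypotheses are essentially used.
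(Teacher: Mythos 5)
First, a point of comparison: the paper does not actually prove this theorem. Its stated proof is a two-line reduction --- taking the measure $\mu=\frac{1}{M}\sum_{i}\delta_{A_{i}}$ on $GL_{d}(\R)$, observing that the operator called $P^{t}$ in Guivarc'h--Le Page is a scalar multiple of $L_{t}$, and citing their Theorem 8.8 (plus their Lemma 4.8 for the $\ol{t}$-H\"older regularity of $h_{t}$). What you have written is therefore an attempt to reprove the cited result from scratch. Your architecture --- a Lasota--Yorke inequality plus Ionescu-Tulcea--Marinescu for quasi-compactness, Markov--Kakutani for $\nu_{t}$, strong irreducibility to exclude support on a projective subspace, and contraction of the projective action to upgrade quasi-compactness to a spectral gap --- is the correct skeleton and matches how the result is proved in the source.

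However, the step you yourself single out as the engine of the argument is stated in a form that is false. You claim one can find $n_{0}$ and a collection of words $I$ of length $n_{0}$, carrying a definite fraction of $\sum_{\abs{I}=n_{0}}\norm{A_{I}}^{t}$, each of which contracts the metric $d$ \emph{uniformly on all of} $\RP^{d-1}$ by a factor $\beta<1$. No invertible matrix can do this: the induced map $\ol{u}\mapsto\ol{A_{I}u}$ is a homeomorphism of the compact space $\RP^{d-1}$ onto itself, so a global Lipschitz constant $\beta<1$ would force $\operatorname{diam}(\RP^{d-1})\leq\beta\operatorname{diam}(\RP^{d-1})$, a contradiction; a proximal product contracts only away from its repelling hyperplane and necessarily expands near it. The correct statement, which is what the cited Theorem 8.8 actually rests on, is an \emph{averaged} contraction: for suitable $\e>0$ there are $C>0$ and $\beta<1$ with
\[ \sum_{\abs{I}=n}\norm{A_{I}}^{t}\left(\frac{d(\ol{A_{I}u},\ol{A_{I}w})}{d(\ol{u},\ol{w})}\right)^{\e}\leq C\beta^{n}\sum_{\abs{I}=n}\norm{A_{I}}^{t} \]
uniformly in $\ol{u}\neq\ol{w}$; which words contract a given pair depends on the pair, and only the weighted average contracts. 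Deriving this from strong irreducibility and proximality is the genuinely hard content of Guivarc'h--Le Page, and your sketch replaces it with an impossible pointwise statement. With the averaged inequality in hand, the remainder of your outline (the Lasota--Yorke bound, the identification of $\rho(L_{t})$ with $e^{P(\A,t)}$, existence and uniqueness of $\nu_{t}$ and $h_{t}$, and the spectral gap) does go through essentially as you describe.
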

\begin{proof}
	If we take the measure on $GL_{d}(\R)$ to be $\mu = \frac{1}{M}\sum_{i=0}^{M-1}\delta_{A_{i}}$ then the operator called $P^{t}$ in \cite{MR2087783} is a scalar multiple of $L_{t}$ and the result follows from \cite[Theorem 8.8]{MR2087783}. That $h_{t}$ is $\ol{t}$-H\"older is \cite[lemma 4.8]{MR2087783}.
\end{proof}

\begin{corollary}\label{ConvergenceThm}
	Under the assumptions of theorem \ref{PFThm} there exists a constant $C>0$ and $\beta$ with $0<\beta<1$ such that for any $f \in C^{\e}(\RP^{d-1})$ we have
	\[ \norm{\rho(L_{t})^{-n}L_{t}^{n}f-\inn{f,\nu_{t}}h_{t}}_{\e} \leq C \norm{f}_{\e}\beta^{n} \]
	for all $n \geq 0$.
\end{corollary}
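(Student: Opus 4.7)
The plan is to read the desired convergence off the spectral decomposition $L_{t} = \rho(L_{t})(P_{t}+R_{t})$ provided by Theorem \ref{PFThm}(5). The decomposition is essentially into a rank-one projection plus an operator with spectral radius strictly less than one, so everything will reduce to Gelfand's formula applied to $R_{t}$.

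First, I would check that $P_{t}$ is genuinely a projection, i.e.\ $P_{t}^{2}=P_{t}$. This is immediate from the formula $P_{t}f = \inn{f,\nu_{t}}h_{t}$ together with the normalization $\inn{h_{t},\nu_{t}}=1$: applying $P_{t}$ twice gives $\inn{f,\nu_{t}}\inn{h_{t},\nu_{t}}h_{t}=\inn{f,\nu_{t}}h_{t}$. Combined with the orthogonality $P_{t}R_{t}=R_{t}P_{t}=0$ also given in Theorem \ref{PFThm}(5), a straightforward induction on the binomial expansion shows that
\[ (P_{t}+R_{t})^{n} = P_{t}+R_{t}^{n} \]
for every $n\geq 0$, because every mixed term in the expansion contains a factor $P_{t}R_{t}$ or $R_{t}P_{t}$ and so vanishes, while $P_{t}^{n}=P_{t}$.

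Consequently $\rho(L_{t})^{-n}L_{t}^{n} = P_{t}+R_{t}^{n}$, and applying both sides to $f\in C^{\e}(\RP^{d-1})$ and subtracting $\inn{f,\nu_{t}}h_{t}=P_{t}f$ yields
\[ \rho(L_{t})^{-n}L_{t}^{n}f - \inn{f,\nu_{t}}h_{t} = R_{t}^{n}f. \]
So the corollary reduces to an operator-norm bound $\norm{R_{t}^{n}}_{\e\to\e}\leq C\beta^{n}$. Since $\rho(R_{t})<1$, Gelfand's formula gives $\limsup_{n\to\infty}\norm{R_{t}^{n}}^{1/n}=\rho(R_{t})<1$; choosing any $\beta$ with $\rho(R_{t})<\beta<1$ therefore yields a constant $C>0$ (depending on $\beta$ and $R_{t}$) such that $\norm{R_{t}^{n}}_{\e\to\e}\leq C\beta^{n}$ for all $n\geq 0$. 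Applying this to $f$ gives $\norm{R_{t}^{n}f}_{\e}\leq C\norm{f}_{\e}\beta^{n}$, which is exactly the claimed inequality.

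There is no real obstacle here; the only thing to be slightly careful about is that the spectral gap statement in Theorem \ref{PFThm}(5) is formulated on $C^{\e}(\RP^{d-1})$, so the operator norms $\norm{R_{t}^{n}}$ used above must be interpreted as norms of bounded operators on this Banach space, and the constant $C$ depends on $\beta$ through the rate at which $\norm{R_{t}^{n}}^{1/n}$ approaches $\rho(R_{t})$. No further estimates on $L_{t}$ are required.
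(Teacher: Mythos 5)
Your proposal is correct and follows essentially the same route as the paper: the paper likewise writes $\rho(L_{t})^{-n}L_{t}^{n}=P_{t}+R_{t}^{n}$, bounds $\norm{R_{t}^{n}f}_{\e}\leq\norm{R_{t}^{n}}_{\e,\mathrm{op}}\norm{f}_{\e}$, and takes $\beta=\rho(R_{t})+\e<1$ via the spectral radius formula. You simply spell out the intermediate verifications ($P_{t}^{2}=P_{t}$ and the collapse of the binomial expansion) that the paper leaves implicit.
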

\begin{proof}
	Notice that $\rho(L_{t})^{-n}L_{t}^{-n}= P_{t}+R_{t}^{n}$. Thus
	\begin{align*}
	\norm{\rho(L_{t})^{-n}L_{t}^{n}f-\inn{f,\nu_{t}}h_{t}}_{\e}=\norm{R_{t}^{n}f}_{\e}\leq \norm{R_{t}^{n}}_{\e,\text{op}}\norm{f}_{\e}.
	\end{align*}
	Taking $\beta=\rho(R_{t})+\e<1$ for a small $\e>0$ we have the result.
\end{proof}

In order to obtain decay of correlation results we are thus forced into controlling the regularity of $L_{J}h_{t}$. This is the content of the next lemma.

\begin{lemma}\label{regularitylemma}
	\begin{enumerate}
		\item 
		For any $A \in GL_{d}(\R)$ we have that
		\[ d(\ol{Au},\ol{Aw})\leq \frac{2 \norm{A}}{\norm{A\frac{u}{\norm{u}}}}d(\ol{u},\ol{w}). \]
		for all $u,w \in \R^{d}$.
		
		\item 
		For any $A \in GL_{d}(\R)$ and $t\geq 0$ we have that 
		\[ \abs{\norm{A\frac{u}{\norm{u}}}^{t}- \norm{A\frac{w}{\norm{w}}}^{t}}\leq (t+1)\norm{A}^{t}d(\ol{u},\ol{w})^{\ol{t}} \]
		for all $u,w \in \R^{d}$.
		
		\item 
		For any $0<\e\leq \ol{t}$ there exists a constant $K$ such that $\norm{L_{J}h_{t}}_{\e} \leq K \norm{A_{J}}^{t}$ for all $J$.
	\end{enumerate}
\end{lemma}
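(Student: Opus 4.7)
The plan is to prove the three parts in order, each feeding into the next. For part (1), I work with unit-vector representatives. Given $u,w$, pick $u',w'$ with $\norm{u'}=\norm{w'}=1$, $\ol{u'}=\ol{u}$, $\ol{w'}=\ol{w}$, and $\norm{u'-w'}=d(\ol{u},\ol{w})$; then $d(\ol{Au},\ol{Aw}) \leq \norm{Au'/\norm{Au'}-Aw'/\norm{Aw'}}$. The identity
\[
\frac{Au'}{\norm{Au'}} - \frac{Aw'}{\norm{Aw'}} = \frac{Au'-Aw'}{\norm{Au'}} + Aw'\left(\frac{1}{\norm{Au'}}-\frac{1}{\norm{Aw'}}\right)
\]
combined with the bounds $\norm{Au'-Aw'}\leq \norm{A}\norm{u'-w'}$ and $\bigl|\norm{Au'}-\norm{Aw'}\bigr|\leq \norm{A}\norm{u'-w'}$ shows each summand is at most $\norm{A}\norm{u'-w'}/\norm{Au'}$. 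Since $\norm{Au'}=\norm{Au/\norm{u}}$ is independent of the sign of $u'$, this yields the stated bound.

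For part (2), set $a=\norm{Au/\norm{u}}$ and $b=\norm{Aw/\norm{w}}$; these are sign-invariant, so $\abs{a-b}\leq \norm{Au'-Aw'}\leq \norm{A}d(\ol{u},\ol{w})$. For $t\geq 1$ the mean value theorem applied to $s\mapsto s^t$ gives $\abs{a^t-b^t}\leq t\max(a,b)^{t-1}\abs{a-b}\leq t\norm{A}^t d(\ol{u},\ol{w})$, and $\ol{t}=1$. For $0<t\leq 1$ the concavity of $s\mapsto s^t$ on $[0,\infty)$ yields $\abs{a^t-b^t}\leq \abs{a-b}^t \leq \norm{A}^t d(\ol{u},\ol{w})^{\ol{t}}$. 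In both cases the prefactor is at most $t+1$; the case $t=0$ is trivial.

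For part (3), factor $L_J h_t(\ol{u}) = a(\ol{u})g(\ol{u})$ with $a(\ol{u})=\norm{A_J u/\norm{u}}^t$ and $g(\ol{u})=h_t(\ol{A_J u})$, and use
\[
\abs{L_J h_t(\ol{u})-L_J h_t(\ol{w})}\leq a(\ol{u})\abs{g(\ol{u})-g(\ol{w})}+\norm{g}_\infty\abs{a(\ol{u})-a(\ol{w})}.
\]
Part (2), together with the bounded diameter of $\RP^{d-1}$ and $\e\leq \ol{t}$, gives $\abs{a(\ol{u})-a(\ol{w})}\leq C_1\norm{A_J}^t d(\ol{u},\ol{w})^\e$. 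For the other term, Theorem \ref{PFThm}(4) says $h_t$ is $\ol{t}$-H\"older, so part (1) applied to $A_J$ yields
\[
\abs{g(\ol{u})-g(\ol{w})}\leq \abs{h_t}_{\ol{t}}\left(\frac{2\norm{A_J}}{\norm{A_J u/\norm{u}}}\right)^{\ol{t}}d(\ol{u},\ol{w})^{\ol{t}}.
\]
Multiplying by $a(\ol{u})=\norm{A_J u/\norm{u}}^t$ produces a leftover factor $\norm{A_J u/\norm{u}}^{t-\ol{t}}$, which since $t-\ol{t}\geq 0$ is bounded by $\norm{A_J}^{t-\ol{t}}$. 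Hence $a(\ol{u})\abs{g(\ol{u})-g(\ol{w})}\leq C_2\norm{A_J}^t d(\ol{u},\ol{w})^{\ol{t}}\leq C_2'\norm{A_J}^t d(\ol{u},\ol{w})^\e$, and together with the trivial $\norm{L_J h_t}_\infty\leq \norm{h_t}_\infty\norm{A_J}^t$ this gives $\norm{L_J h_t}_\e\leq K\norm{A_J}^t$.

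The main obstacle is the bookkeeping in part (3): one must verify that the factor $\norm{A_J u/\norm{u}}$ appearing in the denominator of the part (1) bound is precisely cancelled by the $t$-th power in $a(\ol{u})$, leaving only the harmless nonnegative exponent $t-\ol{t}$. This exact cancellation is the structural reason the weight in the definition of $L_t$ is $\norm{A_i u/\norm{u}}^t$ (rather than any lower power), and it is what ultimately makes the estimate uniform in $J$.
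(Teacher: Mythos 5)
Your proof is correct and follows essentially the same route as the paper: the same algebraic identity (rearranged) for part (1), and the same splitting $a\,\abs{g(\ol{u})-g(\ol{w})}+\norm{g}_{\infty}\abs{a(\ol{u})-a(\ol{w})}$ with the cancellation leaving the exponent $t-\ol{t}\geq 0$ for part (3). The only difference is that you prove part (2) directly via the mean value theorem and subadditivity of $s\mapsto s^{t}$, where the paper simply cites Guivarc'h and Le Page; your argument there is a valid substitute.
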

\begin{proof}
	\begin{enumerate}
		\item
		This is essentially \cite[Lemma 4.6]{MR2087783}. We provide the details for the sake of completeness. Notice for any $u,w$
		\begin{align*}
		\norm{Au}\norm{Aw}\left(\frac{Au}{\norm{Au}}- \frac{Aw}{\norm{Aw}}\right)&= \norm{Aw}Au-\norm{Au}Aw \\
		&=\norm{Aw}Au-\norm{Aw}Aw+\norm{Aw}Aw-\norm{Au}Aw\\
		&=\norm{Aw}(Au-Aw)+(\norm{Aw}-\norm{Au})Aw.
		\end{align*}
		By taking the norm of both sides we have that 
		\[ \norm{Au}\norm{Aw}\norm{\frac{Au}{\norm{Au}}- \frac{Aw}{\norm{Aw}}}\leq 2 \norm{Aw}\norm{A(u-w)}. \]
		Thus
		\begin{align*}
		d(\ol{Au},\ol{Aw})&\leq \norm{\frac{A\frac{u}{\norm{u}}}{\norm{A\frac{u}{\norm{u}}}}- \frac{A\frac{w}{\norm{w}}}{\norm{A\frac{w}{\norm{w}}}}}\\
		&\leq \frac{2}{\norm{A\frac{u}{\norm{u}}}}\norm{A\left(\frac{u}{\norm{u}}-\frac{w}{\norm{w}}\right)}\\
		&\leq \frac{2 \norm{A}}{\norm{A\frac{u}{\norm{u}}}}\norm{\frac{u}{\norm{u}}-\frac{w}{\norm{w}}}.
		\end{align*}
		The same argument holds for $\norm{\frac{-u}{\norm{u}}-\frac{w}{\norm{w}}}$. Hence the result.
		
		\item 
		This is \cite[lemma 4.6]{MR2087783}.
		
		\item 
		Notice
		\begin{align*}
		&\abs{L_{J}h(\ol{u})- L_{J}h(\ol{w})}\\
		&=\abs{\norm{A_{J}\frac{u}{\norm{u}}}^{t}h_{t}(\ol{A_{J}u})-\norm{A_{J}\frac{w}{\norm{w}}}^{t}h_{t}(\ol{A_{J}w}) }\\
		&\leq \norm{A_{J}\frac{u}{\norm{u}}}^{t}\abs{h_{t}(\ol{A_{J}u})-h_{t}(\ol{A_{J}w}) }+ \norm{h_{t}}_{\infty}\abs{\norm{A_{J}\frac{u}{\norm{u}}}^{t}-\norm{A_{J}\frac{w}{\norm{w}}}^{t} }\\
		&\leq \norm{A_{J}\frac{u}{\norm{u}}}^{t}\abs{h_{t}}_{\ol{t}}d(\ol{A_{J}u},\ol{A_{J}w})^{\ol{t}}+ \norm{h_{t}}_{\infty}(t+1)\norm{A_{J}}^{t}d(\ol{u},\ol{w})^{\ol{t}}\\
		&\leq \norm{A_{J}\frac{u}{\norm{u}}}^{t}\abs{h_{t}}_{\ol{t}}\left(\frac{2 \norm{A_{J}}}{\norm{A_{J}\frac{u}{\norm{u}}}}\right)^{\ol{t}}d(\ol{u},\ol{w})^{\ol{t}}+ \norm{h_{t}}_{\infty}(t+1)\norm{A_{J}}^{t}d(\ol{u},\ol{w})^{\ol{t}}\\
		&= \norm{A_{J}\frac{u}{\norm{u}}}^{t-\ol{t}}\norm{A_{J}}^{\ol{t}}\abs{h_{t}}_{\ol{t}}2^{\ol{t}} d(\ol{u},\ol{w})^{\ol{t}}+ \norm{h_{t}}_{\infty}(t+1)\norm{A_{J}}^{t}d(\ol{u},\ol{w})^{\ol{t}}\\
		&\leq \norm{A_{J}}^{t}\abs{h_{t}}_{\ol{t}}2^{\ol{t}} d(\ol{u},\ol{w})^{\ol{t}}+ \norm{h_{t}}_{\infty}(t+1)\norm{A_{J}}^{t}d(\ol{u},\ol{w})^{\ol{t}}\\
		&=\left[\abs{h_{t}}_{\ol{t}}2^{\ol{t}} + \norm{h_{t}}_{\infty}(t+1)\right]\norm{A_{J}}^{t}d(\ol{u},\ol{w})^{\ol{t}}.
		\end{align*}
		Thus for $0<\e\leq \ol{t}$ we have
		\[ \abs{L_{J}h_{t}}_{\e}\leq 2^{\ol{t}-\e}\abs{L_{J}h_{t}}_{\ol{t}}\leq \norm{A_{J}}^{t}2^{\ol{t}-\e}\left[\abs{h_{t}}_{\ol{t}}2^{\ol{t}}+\norm{h_{t}}_{\infty}(t+1)\right]. \]
		Therefore 
		\[ \norm{L_{J}h_{t}}_{\e}=\norm{L_{J}h_{t}}_{\infty}+\abs{L_{J}h_{t}}_{\e}\leq \norm{A_{J}}^{t}\left(\norm{h_{t}}_{\infty}+ 2^{\ol{t}-\e}(\abs{h_{t}}_{\ol{t}}2^{\ol{t}}+\norm{h_{t}}_{\infty}(t+1)) \right). \]
	\end{enumerate}
\end{proof}

The proof of theorem \ref{ExpMixingThm} then follows in exactly the same way as theorem \ref{Section2thm}.

\begin{proof}[proof of theorem \ref{ExpMixingThm}]
	Notice
	\begin{align*}
	&\abs{\mu_{\A,t}([J] \cap \sigma^{-n-\abs{J}}[I])-\mu_{\A,t}([I])\mu_{\A,t}([J])}\\
	&=\abs{\sum_{\abs{K}=n}\mu_{\A,t}([JKI])-\mu_{\A,t}([I])\mu_{\A,t}([J])}\\
	&=\abs{\sum_{\abs{K}=n}\rho(L)^{-(n+\abs{I}+\abs{J})}\inn{L_{I}L_{K}L_{J}h_{t},\nu_{t}}-\rho(L)^{-(\abs{I}+\abs{J})}\inn{L_{I}h_{t},\nu_{t}}\inn{L_{J}h_{t},\nu_{t}}}\\
	&=\rho(L)^{-(\abs{I}+\abs{J})}\abs{\inn{L_{I}\left(\rho(L)^{-n}\sum_{\abs{K}=n}L_{K}\right)L_{J}h_{t},\nu_{t}}-\inn{L_{I}h_{t},\nu_{t}}\inn{L_{J}h_{t},\nu_{t}}}\\
	&=\rho(L)^{-(\abs{I}+\abs{J})}\abs{\inn{L_{I}\rho(L)^{-n}L^{n}L_{J}h_{t},\nu_{t}}-\inn{L_{I}h_{t},\nu_{t}}\inn{L_{J}h_{t},\nu_{t}}}\text{ by \eqref{eq:powersofsum}}\\
	&=\rho(L)^{-(\abs{I}+\abs{J})}\abs{\inn{L_{I}(\rho(L)^{-n}L^{n}L_{J}h_{t}-\inn{L_{J}h_{t},\nu_{t}}h_{t}),\nu_{t}}}\\
	&\leq \rho(L)^{-(\abs{I}+\abs{J})}\norm{L_{I}}_{\infty,\text{op}}\norm{\rho(L)^{-n}L^{n}L_{J}h_{t}-\inn{L_{J}h_{t},\nu_{t}}h_{t}}_{\infty}\\
	&\leq \rho(L)^{-(\abs{I}+\abs{J})}\norm{L_{I}}_{\infty,\text{op}}\norm{L_{J}h_{t}}_{\e}\beta^{n} \text{ by corollary \ref{ConvergenceThm}}\\
	&\leq K \rho(L)^{-(\abs{I}+\abs{J})}\norm{A_{I}}^{t}\norm{A_{J}}^{t}\beta^{n} \text{ by lemma \ref{regularitylemma}}\\
	&\leq C^{2}K \mu_{\A,t}([I])\mu_{\A,t}([J])\beta^{n} \text{ by proposition \ref{GibbsStatesTOconstruction}}
	\end{align*}
	This proves theorem \ref{ExpMixingThm}(1) and (2) follows by an approximation argument as in Bowen's book \cite[Theorem 1.26]{bowen1975equilibrium}.
\end{proof}

Recently in addition to the interest in Gibbs states associated with the norms of matrices there has also been significant interest in the so called singular value potential \cite{MR3820437}, \cite{MR923687}. One can associate a suitable transfer operator to this potential. It seems likely that the method presented in this chapter could be extended to give decay of correlations results for Gibbs states of the singular value potential (in particular taking advantage of \cite[theorem 8.10]{MR2087783}). In addition it seems likely this method could be particularly well suited to studying Gibbs states when $t<0$. For the perspective of thermodynamic formalism it is likely that these measure for $t<0$ are significantly more interesting, for example it is known that the pressure function can fail to analytic \cite{MR2145793} and thus one expects that these systems can exhibit phase transitions. We leave this for future work.

\section{The Weak Bernoulli Property}\label{sec:GeneralMatrices}

The purpose of this section is to prove theorem \ref{MainThm}. The proof is similar to \cite{MR2122435} where scalar potentials satisfying the Bowen property are considered. The key tool is a result of Bradley on $\psi$-mixing sequences of random variables \cite{MR684493} which implies the following lemma.

\begin{lemma}\label{BradleyLemma}
	Let $\mu$ be a shift invariant measure on $\S^{\Z}$. Suppose that for some $N>0$ there exists a constant $C>0$ such that 
	\begin{equation}\label{eq:mixinginequaility}
	C^{-1}\mu([I])\mu([J])\leq \mu([I]\cap \sigma^{-N-\abs{J}} [J])\leq C \mu([I])\mu([J])
	\end{equation}
	for all words $I,J$. Then $\mu$ is weak Bernoulli.
\end{lemma}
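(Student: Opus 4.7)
The plan is to read the hypothesis as a uniform one-lag $\psi$-mixing bound on $\mu$, invoke Bradley's theorem from \cite{MR684493} to upgrade this to $\psi(n)\to 0$ as $n\to\infty$, and then observe that such decay is essentially a reformulation of the weak Bernoulli condition for the generating partition $\P$.

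Concretely, I would define the $\psi$-mixing coefficient
\[
\psi(n)=\sup\left|\frac{\mu(E\cap F)}{\mu(E)\mu(F)}-1\right|,
\]
where the supremum is taken over $k\geq 0$ and events $E\in \bigvee_{i=0}^{k-1}\sigma^{-i}\P$, $F\in \bigvee_{i=k+n}^{\infty}\sigma^{-i}\P$ of positive measure. The hypothesis asserts exactly this type of two-sided bound at lag $N$ for cylinder pairs. Since cylinders form a generating semi-algebra, the inequality $C^{-1}\mu(E)\mu(F)\leq \mu(E\cap F)\leq C\mu(E)\mu(F)$ is preserved under disjoint unions and hence extends from cylinders to arbitrary events in the two relevant sub-$\sigma$-algebras; combined with stationarity to handle arbitrary starting position $k$, this yields $\psi(N)\leq \max\{C-1,\,1-C^{-1}\}<\infty$.

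The main step is to invoke Bradley's theorem, which asserts that for a stationary process, finiteness of $\psi(N)$ at even a single lag $N$ forces $\psi(n)\to 0$ (in fact geometrically). The argument in \cite{MR684493} iterates the one-lag bound against itself using stationarity to chain independent copies of the estimate, forcing cross-correlations to contract. This is the genuine analytic content, and I would cite it as a black box rather than reproduce it.

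Once $\psi(n)\to 0$ is in hand, weak Bernoulli is a one-line consequence. For $r,s\geq 0$, $t\geq s$, and atoms $[I]\in \bigvee_{i=0}^{s-1}\sigma^{-i}\P$, $[_tJ]\in \bigvee_{i=t}^{t+r-1}\sigma^{-i}\P$, the $\psi$-mixing bound gives $\abs{\mu([I]\cap [_tJ]) - \mu([I])\mu([J])}\leq \psi(t-s)\mu([I])\mu([J])$, so summing over atoms,
\[
\sum_{I,J}\abs{\mu([I]\cap [_tJ])-\mu([I])\mu([J])}\leq \psi(t-s)\sum_{I,J}\mu([I])\mu([J])=\psi(t-s),
\]
which is $<\e$ for any prescribed $\e>0$ once $t-s$ is sufficiently large. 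Thus the only substantive obstacle is the invocation of Bradley's theorem; the remainder of the proof is routine bookkeeping in translating between the $\psi$-mixing and combinatorial weak-Bernoulli formulations.
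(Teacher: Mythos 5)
Your overall strategy is the same as the paper's: convert the hypothesis into a two-sided $\psi$-type bound at a single lag, invoke Bradley's theorem from \cite{MR684493} as a black box to obtain $\psi(n)\to 0$, and then translate $\psi$-mixing into the weak Bernoulli condition. The last step is correct, and your bound $\psi(N)\leq \max\set{C-1,\,1-C^{-1}}$ (equivalently $\psi^{*}(N)\leq C$ and $\psi'(N)\geq C^{-1}$) does follow from \eqref{eq:mixinginequaility}, modulo the routine extension from cylinders to the semi-infinite $\sigma$-algebras, which requires a monotone class or martingale argument rather than just disjoint unions.

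The genuine gap is in your statement of Bradley's theorem. It is \emph{not} true that for a stationary process finiteness of $\psi(N)$ at a single lag forces $\psi(n)\to 0$: a stationary process with a deterministic period-two factor has $\psi(n)\equiv 1<\infty$ for every $n$ yet is not even weakly mixing. Bradley's Theorem 1 requires the process to be mixing in the ergodic-theoretic sense; under that hypothesis, $\psi^{*}(n)<\infty$ for some $n$ together with $\psi'(m)>0$ for some $m$ yields $\psi$-mixing. So before the citation can be made you must prove that $\mu$ is mixing, and this is where the real work of the lemma lies. The paper does it in two steps: first it propagates \eqref{eq:mixinginequaility} from lag $N$ to every lag $n\geq N$ by summing over connecting words of length $n-N$, and extends the resulting bounds to all Borel sets; the lower bound $\liminf_{n}\mu(X\cap\sigma^{-n}Y)\geq C^{-1}\mu(X)\mu(Y)$ then gives total ergodicity, and the upper bound $\limsup_{n}\mu(X\cap\sigma^{-n}Y)\leq C\mu(X)\mu(Y)$ combined with total ergodicity gives mixing by a theorem of Ornstein. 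Neither step appears in your argument, and the second in particular is a genuine theorem rather than bookkeeping. Once this intermediate mixing argument is inserted, your proof coincides with the paper's.
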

\begin{proof}[Proof sketch]
	Notice that for $n\geq N$ we have that
	\begin{align*}
	\mu([I] \cap \sigma^{-n-\abs{J}} [J])&= \sum_{\abs{K}=n-N}\mu([I]\cap \sigma^{-N-\abs{K}-\abs{J}} [KJ])\\
	&\geq C^{-1} \sum_{\abs{K}=n-N}\mu([I])\mu([KJ])\\
	&=C^{-1}\mu([I])\sum_{\abs{K}=n-N}\mu([KJ])\\
	&= C^{-1} \mu([I])\mu([J]).
	\end{align*}
	A similar argument for the other inequality shows that in fact \eqref{eq:mixinginequaility} holds with the same constant $C$ for all $n \geq N$. Thus we have by an approximation argument that 
	\[ \limsup_{n \to \infty}\mu(X \cap \sigma^{-n}Y)\leq C \mu(X)\mu(Y) \]
	and 
	\[ \liminf_{n \to \infty}\mu(X \cap \sigma^{-n}Y) \geq C^{-1}\mu(X)\mu(Y) \]
	for all $X,Y$ Borel measurable. The second inequality gives that $\mu$ is totally ergodic and the first then implies that $\mu$ is mixing by a theorem of Ornstein \cite[Theorem 2.1]{MR0399415}. By an approximation argument we have that 
	\begin{gather*}
	\psi^{\ast}_{n}=\sup\set{\frac{\mu(A \cap B)}{\mu(A)\mu(B)}:A \in \bigvee_{i=n}^{\infty}\sigma^{-i}\P, B \in \bigvee_{i=-\infty}^{-1}\sigma^{-i}\P, \mu(A)\mu(B)>0}\leq C\\
	\psi '_{n}=\inf\set{\frac{\mu(A \cap B)}{\mu(A)\mu(B)}:A \in \bigvee_{i=n}^{\infty}\sigma^{-i}\P, B \in \bigvee_{i=-\infty}^{-1}\sigma^{-i}\P, \mu(A)\mu(B)>0}\geq C^{-1}
	\end{gather*}
	for all $n \geq N$. A result of Bradley \cite[Theorem 1]{MR684493} implies that $\mu$ is $\psi$-mixing; that $\psi$-mixing implies weak Bernoulli is trivial.
\end{proof}

The lemma is essentially a rephrasing of \cite[theorem 4.1(2)]{MR2178042}. With this lemma in hand the proof of theorem \ref{MainThm} is merely an application of the Gibbs inequality. 

\begin{proof}[Proof of theorem \ref{MainThm}]
	Let $N$ be as in the definition of primitive. Let $t>1$ and take $q$ such that $1/t+1/q=1$. Then for any $I,J$
	\begin{align*}
	\mu_{\A,t}([I] \cap \sigma^{-N-\abs{J}}[J])&= \sum_{\abs{K}=N}\mu_{\A,t}([IKJ])\\
	&\geq C^{-1} e^{-(\abs{I}+N+\abs{J})P(\A,t)}\sum_{\abs{K}=N}\norm{A_{I}A_{K}A_{J}}^{t}\\
	&\geq  C^{-1} e^{-(\abs{I}+N+\abs{J})P(\A,t)}M^{-Nt/q}\left(\sum_{\abs{K}=N}\norm{A_{I}A_{K}A_{J}}\right)^{t}\\
	&\geq  C^{-1} e^{-(\abs{I}+N+\abs{J})P(\A,t)}M^{-Nt/q}\delta^{t} \norm{A_{I}}^{t}\norm{A_{J}}^{t}\\
	&\geq  C^{-2} e^{-N P(\A,t)}M^{-Nt/q}\delta^{t} \mu_{\A ,t}([I])\mu_{\A,t}([J])
	\end{align*}
	where $M=\abs{\Sigma}$. For $0<t\leq 1$ we have that 
	\begin{align*}
	\mu_{\A,t}([I] \cap \sigma^{-N-\abs{J}}[J])&= \sum_{\abs{K}=N}\mu_{\A,t}([IKJ])\\
	&\geq C^{-1} e^{-(\abs{I}+N+\abs{J})P(\A,t)}\sum_{\abs{K}=N}\norm{A_{I}A_{K}A_{J}}^{t}\\
	&\geq  C^{-1} e^{-(\abs{I}+N+\abs{J})P(\A,t)}\left(\sum_{\abs{K}=N}\norm{A_{I}A_{K}A_{J}}\right)^{t}\\
	&\geq  C^{-1} e^{-(\abs{I}+N+\abs{J})P(\A,t)}\delta^{t} \norm{A_{I}}^{t}\norm{A_{J}}^{t}\\
	&\geq  C^{-2} e^{-N P(\A,t)}\delta^{t} \mu_{\A ,t}([I])\mu_{\A,t}([J]).
	\end{align*}
	For matrix Gibbs states the right hand inequality in equation \eqref{eq:mixinginequaility} always holds. This is a simple consequence of the Gibbs inequality and the fact that the norm is sub-multiplicative, it was noticed in \cite{morris_2017}. The result then follows from lemma \ref{BradleyLemma}.
\end{proof}

\begin{acknowledgements}
	I am grateful to Anthony Quas for suggesting this problem as well as many helpful discussions and comments on drafts of this manuscript. I am also grateful to Chris Bose for helpful discussions and comments on drafts of this manuscript. I am grateful to the referee for a careful reading of the paper, providing helpful suggestions and for pointing me to \cite{MR2087783} which significantly improved the contents of section \ref{sec:fastmixing}.
\end{acknowledgements}

\section{Appendix}

Here we collect some facts as well as the basic definitions and properties of cones in finite dimensional vector spaces. Most of material on cones will be familiar from the classical Perron-Frobenius theory for non-negative matrices. In addition we collect some elementary propositions and lemmas which we use in the article.

\begin{definition}
	A subset $K \subseteq \R^{d}$ is called a cone if 
	\begin{enumerate}
		\item 
		$K \cap (-K) = \set{0}$
		
		\item
		$\lambda K =K$ for all $\lambda >0$
		
		\item
		$K$ is convex
	\end{enumerate}
\end{definition}

If $K$ is a cone then we define the dual of $K$ by
\[ K^{\ast}:= \set{w:\inn{u,w}\geq 0 \text{ for all }u \in K}. \]

\begin{definition}
	Let $A:\R^{d} \to \R^{d}$ be a linear map and $K$ be cone.
	\begin{itemize}
		\item 
		We say that $A$ is $K$-non-negative provided $AK \subseteq K$ and we write $A\geq^{K}0$.
		
		\item 
		We say that $A$ is $K$-positive if $A(K\setminus \set{0}) \subseteq \interior(K)$ and we write $A> ^{K}0$.
		
		\item 
		We say $A\geq^{K}0$ is $K$-primitive if there exists an $N$ such that $A^{N}$ is $K$-positive.
		
		\item 
		We say $A\geq^{K}0$ is $K$-irreducible if $\sum_{k=0}^{d-1}A^{k}$ is $K$-positive.
	\end{itemize}
	Often when $K$ is understood we suppress the notation.
\end{definition}

There are various definitions of $K$-irreducible in the literature. It is known that these conditions are all equivalent however finding a complete proof in the literature is surprising difficult. Thus for the sake of completeness we include proposition \ref{IrrDefs}. In order to achieve $3 \implies 4$ we need the following lemma.

\begin{lemma}\label{ConeInteriorLemma}
	Suppose that $K\subseteq \R^{d}$ is a cone. Then
	\[ \interior(K)=\set{u:\inn{u,v}>0 \text{ for all }v \in K^{\ast}\setminus \set{0}}. \]
\end{lemma}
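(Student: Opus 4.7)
The plan is to prove the two set inclusions separately. Throughout I treat $K$ as a closed convex cone with nonempty interior (the standing assumption in this article), so that the bipolar theorem applies: $K = K^{\ast\ast} = \set{x \in \R^{d} : \inn{x,v} \geq 0 \text{ for all } v \in K^{\ast}}$.

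For the inclusion $\interior(K) \subseteq \set{u : \inn{u,v} > 0 \text{ for all } v \in K^{\ast}\setminus\set{0}}$, suppose $u \in \interior(K)$. Then there exists $\e > 0$ such that $u + \e w \in K$ for every unit vector $w \in \R^{d}$. Given $v \in K^{\ast}\setminus\set{0}$, apply this with $w = -v/\norm{v}$: since $u - \e v/\norm{v} \in K$ and $v \in K^{\ast}$, we get $\inn{u - \e v/\norm{v}, v} \geq 0$, which rearranges to $\inn{u,v} \geq \e\norm{v} > 0$. This is a one-line computation.

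For the reverse inclusion, assume $\inn{u,v} > 0$ for all $v \in K^{\ast}\setminus\set{0}$. First I would verify $u \in K$: by the bipolar theorem, $u \in K$ iff $\inn{u,v} \geq 0$ for every $v \in K^{\ast}$, and this follows trivially from the hypothesis (with the case $v = 0$ handled separately). Next I would show $u \in \interior(K)$ by contradiction. If $u$ were a boundary point of $K$, there would exist a sequence $u_{n} \to u$ with $u_{n} \notin K$. For each $n$, the Hahn--Banach separation theorem applied to the closed convex set $K$ and the point $u_{n} \notin K$ yields a unit vector $v_{n}$ with $\inn{u_{n}, v_{n}} < 0 \leq \inn{w, v_{n}}$ for every $w \in K$. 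The right-hand inequality says $v_{n} \in K^{\ast}$. By compactness of the unit sphere, pass to a subsequence $v_{n_{k}} \to v$ with $\norm{v} = 1$ and (since $K^{\ast}$ is closed) $v \in K^{\ast}\setminus\set{0}$. Letting $k \to \infty$ in $\inn{u_{n_{k}}, v_{n_{k}}} < 0$ gives $\inn{u,v} \leq 0$, contradicting the hypothesis.

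The main obstacle is the reverse inclusion: the forward inclusion is elementary, but the reverse requires combining the separation theorem with a compactness argument to extract the witness $v \in K^{\ast}\setminus\set{0}$, and uses closedness of $K$ (implicit in the paper's setting) via the bipolar theorem to first place $u$ inside $K$.
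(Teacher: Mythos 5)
Your proof is correct, but the harder inclusion is handled by a genuinely different route than the paper's. For $\interior(K)\subseteq\set{u:\inn{u,v}>0}$ you and the paper do essentially the same thing (you test against $u-\e v/\norm{v}\in K$; the paper tests against $u\pm w$ for all $\norm{w}=\delta/2$ and packages the result as $\norm{v}\leq (2/\delta)\inn{u,v}$ --- the same estimate). For the reverse inclusion the paper argues \emph{directly}: from the dual representation $K=\set{w:\inn{w,v}\geq 0 \text{ for all } v\in K^{\ast},\ \norm{v}=1}$ it extracts, by compactness of $\set{v\in K^{\ast}:\norm{v}=1}$, a uniform bound $\inn{u,v}>\delta$ on that slice, and then checks that every $w$ with $\norm{w-u}<\delta/2$ still satisfies $\inn{w,v}\geq\delta/2>0$, so an explicit ball $B(u,\delta/2)$ sits inside $K$. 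You instead argue by contradiction: approximate a putative boundary point by exterior points $u_{n}$, separate each from $K$ by a unit functional $v_{n}\in K^{\ast}$ via Hahn--Banach, and pass to a subsequential limit $v\in K^{\ast}\setminus\set{0}$ with $\inn{u,v}\leq 0$. Both arguments lean on the same two ingredients (duality for closed cones and compactness in $\R^{d}$), and both require $K$ closed, which is consistent with the paper's standing hypotheses; the paper's version is a bit shorter and yields a quantitative radius for the interior ball, while yours avoids needing the uniform $\delta$ at the cost of invoking the separation theorem sequentially. One small point you elide: obtaining the normalized separation $\inn{u_{n},v_{n}}<0\leq\inn{w,v_{n}}$ for all $w\in K$ from generic Hahn--Banach uses that $K$ is a cone containing $0$ (so the separating constant can be pushed to $0$); this is standard but worth a sentence.
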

\begin{proof}
	First we recall that 
	\[ K=\set{u: \inn{u,v}\geq 0 \text{ for all }v \in K^{\ast} \text{ with }\norm{v}=1 } \]
	this is a very general fact for closed cones in Banach spaces which follows from a suitable Hahn-Banach theorem, see \cite{eveson_nussbaum_1995} for a nice discussion. Let 
	\[u \in\set{u:\inn{u,v}>0 \text{ for all }v \in K^{\ast}\setminus \set{0}}.\]
	Now take $\delta>0$ such that $\inn{u,v}>\delta$ for all $v \in K^{\ast}$ with $\norm{v}=1$. Suppose that $\norm{w-u}<\delta/2$ then for any $v \in K^{\ast}$ with $\norm{v}=1$ we have
	\begin{align*}
	\abs{\inn{u,v}-\inn{w,v}}= \abs{\inn{u-w,v}}\leq \norm{w-u}<\delta/2.
	\end{align*}
	This implies that $\inn{w,v}\geq \delta/2>0$ and thus $w \in K$. As $B(u,\delta/2)\subseteq K$ we have that $u \in \interior(K)$. 
	
	Now suppose that $u \in \interior(K)$. Take $\delta>0$ such that $B(u,\delta)\subseteq K$ if $\norm{w}=\delta/2$ then 
	\[ \norm{(u+w)-u}=\norm{w}<\delta \text{ and }\norm{(u-w)-u}=\norm{w}<\delta \]
	implies that $u-w,u+w \in K$. Thus for any $v \in K^{\ast}$
	\[0 \leq \inn{u+w,v}=\inn{u,v}+\inn{w,v} \text{ and }0 \leq \inn{u-w,v}=\inn{u,v}-\inn{w,v} \]
	which implies 
	\[ -\inn{w,v}\leq \inn{u,v}\leq \inn{w,v}. \]
	Then we have
	\[\norm{v}=2/\delta\sup_{\norm{w}= \delta/2}\abs{\inn{w,v}}\leq 2/\delta \inn{u,v}. \]
	In particular if $v\neq 0$ then $\inn{u,v}>0$.
\end{proof}

\begin{proposition}\label{IrrDefs}
	Suppose that $A$ preserves a non-void cone $K\subseteq \R^{d}$. The following are equivalent:
	\begin{enumerate}
		\item 
		$A$ has no eigenvector contained in $\partial K$.
		
		\item 
		$A$ has no invariant faces.
		
		\item 
		$(I+A)^{d-1}$ is $K$-positive.
		
		\item 
		$\sum_{k=0}^{d-1}A^{k}$ is $K$-positive.
	\end{enumerate}
\end{proposition}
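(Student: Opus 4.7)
My plan is to prove the four conditions equivalent via the cycle $(1)\Leftrightarrow(2)$, $(2)\Rightarrow(4)\Leftrightarrow(3)$, $(3)\Rightarrow(1)$. The technical heart is a single observation about faces of a convex cone: for any face $F$ of $K$, one has $F = K \cap \spn(F)$. To prove this, take $w \in K \cap \spn(F)$ and split $w = a - b$ with $a,b$ positive combinations of elements of $F$ (hence $a,b \in F$); then $w + b = a \in F$ with $w, b \in K$, and the defining property of a face (that $c_{1}, c_{2} \in K$ with $c_{1} + c_{2} \in F$ forces $c_{1}, c_{2} \in F$) yields $w \in F$. Since an arbitrary intersection of faces is easily seen to be a face, one can speak of the minimal face of $K$ containing any subset $S \subseteq K$.

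For $(1) \Leftrightarrow (2)$, suppose first $Av = \lambda v$ with $v \in \partial K \setminus \set{0}$, necessarily with $\lambda \geq 0$. Let $F_{v}$ be the minimal face containing $v$ and consider $F' := \set{w \in F_{v} : Aw \in F_{v}}$. One checks $F'$ is a face of $K$: if $a + b \in F'$ with $a, b \in K$, the face property of $F_{v}$ applied to both $a + b$ and $A(a+b) = Aa + Ab$ (using $Aa, Ab \in K$) gives $a, b, Aa, Ab \in F_{v}$, so $a, b \in F'$. As $v, Av \in F_{v}$ we have $v \in F'$, so minimality forces $F' = F_{v}$, making $F_{v}$ an $A$-invariant proper face. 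Conversely, given a proper $A$-invariant face $F$, the $A$-invariant and $(I+A)$-invariant faces coincide (from the face property applied to $(I+A)w = w + Aw$), and $(I+A)$ does not vanish on $F \setminus \set{0}$, so Brouwer's fixed-point theorem applied to the normalization of $I+A$ on a compact convex cross-section of $F$ produces a common eigenvector of $A$ and $I+A$ lying in $F \subseteq \partial K$.

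For $(2) \Rightarrow (4)$, let $x \in K \setminus \set{0}$ and let $F_{d}$ be the minimal face of $K$ containing $\set{A^{k}x : 0 \leq k \leq d-1}$, which (by applying the face property to the positive combination) coincides with the minimal face containing $y := \sum_{k=0}^{d-1} A^{k}x$. By the Cayley--Hamilton theorem, $A^{d}x \in \spn\set{A^{k}x : 0 \leq k \leq d-1} \subseteq \spn(F_{d})$, and $A^{d}x \in K$, so the identity $F_{d} = K \cap \spn(F_{d})$ forces $A^{d}x \in F_{d}$. Hence $F' := \set{w \in F_{d} : Aw \in F_{d}}$ is a face containing $A^{k}x$ for every $0 \leq k \leq d-1$, so $F' = F_{d}$ by minimality and $F_{d}$ is $A$-invariant. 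Under hypothesis $(2)$ the only $A$-invariant faces are $\set{0}$ and $K$; since $F_{d} \ni x \neq 0$, we must have $F_{d} = K$, and therefore $y \in \interior(K)$.

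The implication $(4) \Leftrightarrow (3)$ follows from the operator sandwich $\sum_{k=0}^{d-1} A^{k} \leq^{K} (I+A)^{d-1} \leq^{K} C \sum_{k=0}^{d-1} A^{k}$ with $C = \max_{k} \binom{d-1}{k}$, combined with the fact that $\interior(K) + K \subseteq \interior(K)$ and is closed under positive scaling. Finally $(3) \Rightarrow (1)$ is immediate: an eigenvector $v \in \partial K \setminus \set{0}$ with $Av = \lambda v$ (so $\lambda \geq 0$) would give $(I+A)^{d-1} v = (1+\lambda)^{d-1} v \in \partial K$, contradicting the $K$-positivity of $(I+A)^{d-1}$. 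The main obstacle is the face identity $F = K \cap \spn(F)$; once it is in hand, Cayley--Hamilton closes the loop in $(2) \Rightarrow (4)$ and the rest is bookkeeping.
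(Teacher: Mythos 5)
Your proof is correct, but it takes a genuinely different and far more self-contained route than the paper. The paper itself only proves $(3)\Rightarrow(4)$ (by pairing $(I+A)^{d-1}u$ against functionals in $K^{\ast}$ via Lemma \ref{ConeInteriorLemma} and discarding the binomial coefficients) and $(4)\Rightarrow(1)$ (an eigenvector of $A$ in $\partial K$ is also one for $\sum_{k}A^{k}$, since the eigenvalue is nonnegative); the two substantial implications $(1)\Leftrightarrow(2)$ and $(2)\Rightarrow(3)$ are simply cited from Vandergraft \cite{MR0244284}. You instead derive everything from the single identity $F=K\cap\spn(F)$ for faces: the citation for $(1)\Leftrightarrow(2)$ is replaced by a minimal-face construction plus Brouwer, and the citation for $(2)\Rightarrow(3)$ is replaced by a direct Cayley--Hamilton proof of $(2)\Rightarrow(4)$; your $(3)\Leftrightarrow(4)$ sandwich is the same computation as the paper's $(3)\Rightarrow(4)$, phrased with cone inequalities rather than dual pairings, and yields both directions at once. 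What this buys is exactly what the paper says is hard to find in the literature, namely a complete proof; what it costs is length, an appeal to Brouwer, and one fact you use twice without justification: that the minimal face of $K$ containing a nonzero $y$ equals $K$ only if $y\in\interior(K)$ (you need this both to know that $F_{v}$ is a \emph{proper} invariant face and to pass from $F_{d}=K$ to $y\in\interior(K)$). This is standard and quick --- a supporting hyperplane at $y\in\partial K$ produces $\phi\in K^{\ast}\setminus\set{0}$ with $\phi(y)=0$, and $K\cap\ker\phi$ is then a proper nontrivial face containing $y$ --- but it is precisely the bridge between the face language of $(2)$ and the boundary language of $(1)$, so it should be stated explicitly in a final write-up.
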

\begin{proof}
	($1 \iff 2$) is \cite[theorem 4.2]{MR0244284}.\\
	
	($2 \implies 3$) is \cite[lemma 4.2]{MR0244284}.\\
	
	($4 \implies 1$) is clear if $A$ has an eigenvector contained in $\partial K$ then so does $\sum_{k=0}^{d-1}A^{k}$.\\
	
	($3 \implies 4$) Suppose that $u \in K\setminus \set{0}$ by the assumption that $(I+A)^{d-1}>^{K}0$ we have that for any $v \in K^{\ast}\setminus \set{0}$
	\[ 0<\inn{(I+A)^{d-1}u,v}= \sum_{k=0}^{d-1}\binom{d-1}{k}\inn{A^{k}u,v} \]
	by lemma \ref{ConeInteriorLemma}. This implies that
	\[ 0<\sum_{k=0}^{d-1}\inn{A^{k}u,v}= \inn{\left(\sum_{k=0}^{d-1}A^{k} \right)u,v}  \]
	and hence $\sum_{k=0}^{d-1}A^{k}u \in \interior(K)$ by lemma \ref{ConeInteriorLemma} and $\sum_{k=0}^{d-1}A^{k}$ is $K$-positive.
\end{proof}

The Perron-Frobenius theorem holds for abstract finite dimensional cones just as it does for the positive quadrant.

\begin{theorem}\label{PFThmCones}
	Suppose that $K$ is a closed cone with non-void interior.
	\begin{enumerate}
		\item
		If $A$ is $K$-non-negative then
		\begin{enumerate}
			\item
			$\rho(A)$ is an eigenvalue.
			
			\item
			$K$ contains an eigenvector corresponding to $\rho(A)$.
		\end{enumerate}
		\item
		If $A$ is $K$-irreducible then
		\begin{enumerate}
			\item
			$\rho(A)$ is a simple eigenvalue, and any other eigenvalue with the same modulus is simple.
			
			\item 
			Suppose that $u$ is an eigenvector for $A$ corresponding to $\rho(A)$ and $v$ is an eigenvector of $A^{T}$ corresponding to $\rho(A)$ normalized so that $\inn{u,v}=1$. Then
			\[ \lim_{n \to \infty}\frac{1}{n}\sum_{k=1}^{n}\rho(A)^{-k}A^{k}=P \]
			where $Pw=\inn{w,v}u$.
		\end{enumerate}
		
		\item
		If $A$ is $K$-primitive then 
		\begin{enumerate}
			\item
			$\rho(A)$ is a simple eigenvalue, which is greater in modulus then any other eigenvalue.
			
			\item 
			Suppose that $u$ is an eigenvector for $A$ corresponding to $\rho(A)$ and $v$ is an eigenvector of $A^{T}$ corresponding to $\rho(A)$ normalized so that $\inn{u,v}=1$. Then for all small $\e>0$ there exists $C>0$ such that for all $n \geq 0$
			\[ \norm{\rho(A)^{-n}A^{n}-P}\leq C \left(\frac{\abs{\lambda_{2}}+\e}{\rho(A)}\right)^{n} \]
			where $Pw=\inn{w,v}u$.
		\end{enumerate}
	\end{enumerate}
\end{theorem}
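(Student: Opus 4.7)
The plan is to prove the three parts in order.

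\textbf{Part 1} (existence of $\rho(A)$ as an eigenvalue with an eigenvector in $K$) is the finite-dimensional Krein-Rutman theorem. I would prove it by perturbing: fix $u_{0}\in\interior(K)$ and $v\in\interior(K^{\ast})$ (both interiors non-empty since $K$ is pointed with non-void interior), and set $A_{\e}=A+\e u_{0}v^{T}$. Then $A_{\e}$ is $K$-positive, since for $x\in K\setminus\set{0}$ we have $A_{\e}x=Ax+\e\inn{v,x}u_{0}\in K+\interior(K)\subseteq\interior(K)$ (using Lemma \ref{ConeInteriorLemma} to see $\inn{v,x}>0$). Brouwer's fixed-point theorem applied to $x\mapsto A_{\e}x/\inn{A_{\e}x,v}$ on the compact convex slice $\set{x\in K:\inn{x,v}=1}$ yields a fixed point $u_{\e}\in\interior(K)$, hence an eigenvector of $A_{\e}$ at some positive eigenvalue; by uniqueness of positive eigenvectors in $\interior(K)$ for $K$-positive operators, this eigenvalue is $\rho(A_{\e})$. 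Passing to a subsequential limit as $\e\searrow 0$ and using continuity of the spectral radius under rank-one perturbation yields a unit vector $u\in K$ with $Au=\rho(A)u$, proving 1(a) and 1(b).

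For \textbf{Part 2}, apply Part 1 to both $A$ on $K$ and $A^{T}$ on $K^{\ast}$, obtaining $u\in K$ and $v\in K^{\ast}$ with $Au=\rho(A)u$ and $A^{T}v=\rho(A)v$. Proposition \ref{IrrDefs} rules out eigenvectors of $A$ on $\partial K$, so $u\in\interior(K)$ and likewise $v\in\interior(K^{\ast})$; Lemma \ref{ConeInteriorLemma} then forces $\inn{u,v}>0$, and we normalize $\inn{u,v}=1$. Algebraic simplicity of $\rho(A)$ is the standard duality trick: a generalized-eigenvector relation $(A-\rho(A)I)y=u$ would yield $\inn{u,v}=\inn{Ay,v}-\rho(A)\inn{y,v}=\inn{y,A^{T}v}-\rho(A)\inn{y,v}=0$, contradicting $\inn{u,v}=1$. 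That every other peripheral eigenvalue is simple follows from the classical peripheral-spectrum analysis for irreducible non-negative operators: $A/\rho(A)$ acts on the complex span of peripheral eigenvectors as a finite-order map whose orbit structure, constrained by the cone, forces the peripheral spectrum to be a cyclic group with each element a simple eigenvalue. Part 2(b) is then routine spectral theory: semi-simple peripheral spectrum implies that the Ces\`aro averages $\frac{1}{n}\sum_{k=1}^{n}\rho(A)^{-k}A^{k}$ converge to the spectral projection $P$ onto the $\rho(A)$-eigenspace, and the normalization $\inn{u,v}=1$ identifies $P$ as $Pw=\inn{w,v}u$.

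For \textbf{Part 3}, primitivity provides $N$ with $A^{N}$ $K$-positive, hence $K$-irreducible, so Part 2 applies to $A^{N}$. The new input is that the peripheral spectrum of a $K$-positive operator consists only of $\rho(A^{N})$; this is transparent via Birkhoff's theorem, which gives that a $K$-positive operator strictly contracts the Hilbert projective metric on $\interior(K)$, precluding any non-trivial peripheral rotation. Thus $\rho(A^{N})$ is strictly dominant for $A^{N}$, and any peripheral $\lambda$ of $A$ satisfies $\lambda^{N}=\rho(A)^{N}$. Since $A$ must be injective on $K\setminus\set{0}$ (else $A^{N}$ would kill a vector in $K\setminus\set{0}$, contradicting $K$-positivity), $A^{N+1}$ is also $K$-positive, yielding $\lambda^{N+1}=\rho(A)^{N+1}$; the two together force $\lambda=\rho(A)$, establishing 3(a). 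For 3(b), the Jordan decomposition of $A/\rho(A)$ gives $\rho(A)^{-n}A^{n}=P+N_{n}$ with $\norm{N_{n}}\leq Cn^{d-1}(\abs{\lambda_{2}}/\rho(A))^{n}$, and the polynomial factor is absorbed into $(\abs{\lambda_{2}}+\e)^{n}/\rho(A)^{n}$ for any small $\e>0$.

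The hardest step is the peripheral-spectrum analysis in 2(a) and 3(a): proving simplicity of the peripheral eigenvalues in the irreducible case requires a careful cyclic-structure argument on the cone (generalizing the classical treatment of imprimitive non-negative matrices), and ruling them out entirely in the primitive case is most cleanly handled through Birkhoff's Hilbert-projective-metric contraction theorem. The remaining pieces — Krein-Rutman existence, algebraic simplicity by the duality pairing, and Jordan/Ces\`aro asymptotics — are routine.
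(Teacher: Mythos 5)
Your proposal takes a genuinely different route from the paper, which offers no self-contained argument at all: it proves part 1 by citing \cite[theorem 3.1]{MR0244284}, part 2(a) by citing \cite[theorem 4.3]{MR0244284} with 2(b) deferred to the standard non-negative-matrix proof in \cite[theorem 8.6.1]{MR2978290}, and part 3 by appeal to Hilbert-metric results such as \cite[theorem 2.3]{eveson_nussbaum_1995}. You instead reconstruct the proofs: Krein--Rutman existence via the rank-one perturbation $A+\e u_{0}v^{T}$ and Brouwer, simplicity via duality against the left eigenvector, the primitive case via Birkhoff contraction together with the pleasant observation that $A^{N}$ and $A^{N+1}$ are both $K$-positive (forcing $\lambda^{N}=\rho(A)^{N}$ and $\lambda^{N+1}=\rho(A)^{N+1}$, hence $\lambda=\rho(A)$), and 3(b) from the Jordan form. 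What your approach buys is a self-contained appendix; what the paper's buys is brevity and a safe handling of the one genuinely delicate point, which your sketch does not fully close.

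Two remarks on that point. First, the duality computation $(A-\rho(A)I)y=u\Rightarrow\inn{u,v}=0$ only shows that the Jordan block containing $u$ has size one; it does not exclude a second, independent eigenvector for $\rho(A)$, so by itself it does not give algebraic simplicity. You need to add the standard boundary argument: if $Aw=\rho(A)w$ with $w$ not proportional to $u$ (replace $w$ by its real or imaginary part if necessary), then for a suitable $t$ the vector $u-tw$ is a nonzero eigenvector lying on $\partial K$, contradicting Proposition \ref{IrrDefs}. Second, the simplicity of the \emph{other} peripheral eigenvalues in 2(a) --- precisely what makes the Ces\`aro averages in 2(b) converge, and 2(b) is what Proposition \ref{BasicPropertiesMesaureFullShift} actually uses --- is asserted via an appeal to the cyclic structure of the peripheral spectrum but not proved; you flag this yourself as the hardest step. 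That is the one place where your sketch needs real additional work or, as the paper does, a citation to \cite{MR0244284}.
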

\begin{proof}
	\begin{enumerate}
		\item 
		This is \cite[theorem 3.1]{MR0244284}.
		
		\item 
		(a) can be found in \cite[theorem 4.3]{MR0244284}. (b) follows from (a) using the same proof as for non-negative matrices, which can be found in \cite[theorem 8.6.1]{MR2978290}.
		
		\item 
		This result is well known. It can be proved for example using Hilbert's projective metric and holds in significant generality see for example \cite[theorem 2.3]{eveson_nussbaum_1995} (although the result in \cite{eveson_nussbaum_1995} is significantly more powerful then needed here). The article \cite{MR0244284} contains a proof this result when $A$ is assumed $K$-positive.
	\end{enumerate}
\end{proof}

It is clear from the definition of irreducible and primitive that the eigenvector corresponding to $\rho(A)$ is contained in the interior of the cone $K$. This agrees with the fact from classical Perron-Frobenius theory the the eigenvector has all positive entries. Notice also that for $K$-irreducible/primitive matrices there always exist vectors $u$ and $v$ with $u$ an eigenvector for $A$ corresponding to $\rho(A)$ and $v$ is an eigenvector for $A^{T}$ corresponding to $\rho(A)$ such that $\inn{u,v}=1$ (by 1(b) and the observation that $u \in \interior(K)$ by irreducibility/primitivity). This ensures that 2(b) and 3(b) are never vacuous. We need the following to produce the Gibbs inequality.

\begin{lemma}\label{approxnormlemma}
	Suppose that $K$ is a cone and that $D\subset \interior(K)$ and $D^{\ast}\subset \interior(K^{\ast})$ are non-empty and compact. Then there exists a constant $C>0$ such that
	\[ C^{-1}\norm{A}\leq \inn{Au,v}\leq C \norm{A} \]
	for all $u\in D$, $v \in D^{\ast}$, and $A\geq^{K}0$.
\end{lemma}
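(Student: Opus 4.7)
The plan is to prove the two inequalities separately, with the upper bound essentially free and the lower bound doing all the real work by iterating a simple cone-positivity trick.

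For the upper bound, I will just apply Cauchy--Schwarz: $\inn{Au,v} \leq \norm{A}\norm{u}\norm{v}$, and then bound $\norm{u}$ and $\norm{v}$ above using compactness of $D$ and $D^{\ast}$. Note this bound does not even require $A\geq^{K}0$.

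For the lower bound I will prove two separate bounds and combine them. First, since $D^{\ast}$ is a compact subset of $\interior(K^{\ast})$, there exists $s>0$ with $B(v,s)\subseteq K^{\ast}$ for all $v\in D^{\ast}$; for any unit vector $w$ the vectors $v\pm sw$ lie in $K^{\ast}$, so for every $y\in K$ one has $\inn{y,v\pm sw}\geq 0$, whence $\abs{\inn{y,w}}\leq s^{-1}\inn{y,v}$, and taking the supremum over $\norm{w}=1$ yields $\norm{y}\leq s^{-1}\inn{y,v}$. Applying this with $y=Au\in K$ gives $\inn{Au,v}\geq s\norm{Au}$. Symmetrically, using compactness of $D\subseteq \interior(K)$ I can find $r>0$ with $B(u,r)\subseteq K$ for all $u\in D$; for any unit vector $x$ we have $u\pm rx\in K$, hence $Au\pm rAx\in K$, and pairing against any $w_{0}\in K^{\ast}$ gives
\[ \abs{\inn{Ax,w_{0}}}\leq r^{-1}\inn{Au,w_{0}}. \]

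Now I will combine the two: to estimate $\norm{Ax}=\sup_{\norm{w}=1}\inn{Ax,w}$, I decompose a unit vector $w$ as $w=\frac{1}{2s}\bigl((v_{0}+sw)-(v_{0}-sw)\bigr)$ for a fixed $v_{0}\in D^{\ast}$, so that both $v_{0}\pm sw$ lie in $K^{\ast}$. Applying the second bound above with $w_{0}=v_{0}\pm sw$ then collapses the cross terms:
\[ \abs{\inn{Ax,w}}\leq \tfrac{1}{2sr}\bigl(\inn{Au,v_{0}+sw}+\inn{Au,v_{0}-sw}\bigr)=\tfrac{1}{sr}\inn{Au,v_{0}}\leq \tfrac{\norm{v_{0}}}{sr}\norm{Au}. \]
Taking suprema over unit $w$ and then unit $x$ gives $\norm{A}\leq \tfrac{M}{sr}\norm{Au}$ where $M=\sup_{D^{\ast}}\norm{v_{0}}$, and inserting this into the first bound $\inn{Au,v}\geq s\norm{Au}$ yields $\inn{Au,v}\geq \tfrac{s^{2}r}{M}\norm{A}$.

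The main obstacle, and the only genuinely non-trivial step, is converting the one-sided cone information $Au\pm rAx\in K$ into a two-sided operator norm bound, since a priori the bound $\abs{\inn{Ax,w_{0}}}\leq r^{-1}\inn{Au,w_{0}}$ only holds for $w_{0}\in K^{\ast}$ and not all of $(\R^{d})^{\ast}$. The trick is exactly the symmetric decomposition $w=\frac{1}{2s}((v_{0}+sw)-(v_{0}-sw))$ enabled by $v_{0}\in\interior(K^{\ast})$, which reduces an arbitrary test functional to two non-negative ones and turns the one-sided cone inequality into a uniform norm estimate.
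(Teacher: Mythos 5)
Your proof is correct, but it takes a genuinely different route from the paper. The paper's proof is soft: it shows that $\inn{Au,v}>0$ whenever $A\geq^{K}0$ and $A\neq 0$ (if $\inn{Au,v}=0$ with $u\in\interior(K)$, $v\in\interior(K^{\ast})$, then $A=0$), and then invokes continuity and positivity of $(A,u,v)\mapsto \inn{\frac{Au}{\norm{A}},v}$ on the compact set of norm-one $K$-non-negative matrices times $D\times D^{\ast}$ to extract the constant $C$. Your argument replaces this compactness step with an explicit quantitative computation: the radii $r,s$ of balls around $D$ and $D^{\ast}$ inside the respective cones, the decomposition $w=\frac{1}{2s}\bigl((v_{0}+sw)-(v_{0}-sw)\bigr)$ to pass from the one-sided cone inequality to a genuine operator-norm bound, and the resulting constant $C=\max\set{\sup_{D}\norm{u}\sup_{D^{\ast}}\norm{v},\; M/(s^{2}r)}$. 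What your approach buys is an explicit constant in terms of the geometry of $D$, $D^{\ast}$ and $K$, and independence from the compactness of the unit sphere in the space of operators, so it would survive in an infinite-dimensional setting where the paper's argument would not; what the paper's approach buys is brevity and the reuse of Lemma \ref{ConeInteriorLemma}. Both arguments are complete (your $v\pm sw$ and $u\pm rx$ lie a priori only in the closed balls, but $K$ and $K^{\ast}$ are closed, so this is harmless).
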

\begin{proof}
	Suppose $A\geq^{K}0$, and that for some $u \in \interior(K)$ and $v \in \interior(K^{\ast})$ we have $\inn{Au,v}=0$. Then $Au=0$ by an arguement similar to lemma \ref{ConeInteriorLemma}. Thus for any $w \in K^{\ast}$ we have that $\inn{u,A^{\ast}w}=0$ which implies that $A^{\ast}w=0$ by lemma \ref{ConeInteriorLemma}. Therefore $A^{\ast}=0$ and of course $A=0$. Thus the function
	\[ (A,u,v)\mapsto \inn{Au,v} \]
	is continuous and $\inn{Au,v}>0$ for all $A\geq^{K}0$, $A \neq 0$ and $u\in D,v\in D^{\ast}$. As the set of norm one $K$ non-negative matrices cross $D \times D^{\ast}$ is compact we can find a $C>0$ such that
	\[ C^{-1}\leq \inn{\frac{Au}{\norm{A}},v}\leq C \]
	for all $A\geq^{K}0$, $A \neq 0$ and $u\in D, v\in D^{\ast}$. Clearly the inequality holds for $A=0$ hence we have the result.
\end{proof}

With the proceeding lemma the proof of the following proposition which relates the definition of irreducibility and primitivity from the introduction to that for operators is straightforward.

\begin{proposition}\label{IrrPrimDefProp}
	Let $\A=(A_{0},\ldots , A_{M-1})\in M_{d}(\R)$ and define $L_{\A}$ as in example \ref{ClassicalKusuokaMeasure}. 
	\begin{enumerate}
		\item 
		If $L_{\A}$ is irreducible then $\A$ satisfies equation \eqref{eq:irrinequality}.
		
		\item 
		If $L_{\A}$ is primitive then $\A$ satisfies equation \eqref{eq:priminequality}.
	\end{enumerate}
\end{proposition}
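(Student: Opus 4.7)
The plan is to derive both parts from a single algebraic identity. Expanding $L_{\A}^n B = \sum_{\abs{K}=n} A_K^* B A_K$ by induction on $n$ and cycling traces gives
\[ \sum_{\abs{K}=n} \norm{A_I A_K A_J}_{\text{HS}}^2 = \inn{L_{\A}^n(A_I^* A_I),\, A_J A_J^*}_{\text{HS}}. \]
This is the bridge from a Perron--Frobenius property of $L_{\A}$ on the PSD cone $K$ to the word-level inequalities \eqref{eq:irrinequality} and \eqref{eq:priminequality}.

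For part (1) I will argue by contrapositive. Suppose $\A$ preserves a proper non-trivial subspace $W$, and set $F_W = \set{B \in K : B|_W = 0}$, the PSD matrices whose range lies in $W^{\perp}$. This is a non-zero proper face of $K$: it contains the orthogonal projection onto $W^{\perp}$ but not the identity $I$. Moreover $F_W$ is $L_{\A}$-invariant, since for $v \in W$ one has $A_i v \in W$, so $(A_i^* B A_i)v = A_i^* B (A_i v) = 0$ whenever $B \in F_W$. By Proposition \ref{IrrDefs} such an invariant face is incompatible with $K$-irreducibility of $L_{\A}$, so $\A$ can have no common invariant subspace, and the classical argument recorded in the introduction then yields \eqref{eq:irrinequality}.

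For part (2) fix $N$ as in the definition of $K$-primitivity of $L_{\A}$, so $L_{\A}^N$ maps $K \setminus \set{0}$ into $\interior(K)$. The unit sphere of $K$ is compact, its image under $L_{\A}^N$ is a compact subset of $\interior(K)$, and consequently $\lambda_{\min}(\cdot)$ attains a uniform positive minimum $c > 0$ on that image. Rescaling gives $L_{\A}^N(A_I^* A_I) \geq c \norm{A_I}^2 I$ in the PSD order. Pairing this with $A_J A_J^*$ in the HS inner product and invoking the identity above yields
\[ \sum_{\abs{K}=N} \norm{A_I A_K A_J}_{\text{HS}}^2 \geq c \norm{A_I}^2 \tr(A_J A_J^*) \geq c \norm{A_I}^2 \norm{A_J}^2. \]
Combining the elementary bound $\left(\sum_K x_K\right)^2 \geq \sum_K x_K^2$ for nonnegative $x_K$ with the equivalence of the HS and operator norms on $M_d(\R)$ then upgrades this to $\sum_{\abs{K}=N}\norm{A_I A_K A_J} \geq \delta \norm{A_I}\norm{A_J}$ for some $\delta > 0$, which is \eqref{eq:priminequality}.

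The main obstacle is the bookkeeping in part (2): because $L_{\A}$ conjugates on the left by $A_i^*$, one must test against the asymmetric pair $(A_I^* A_I,\ A_J A_J^*)$ in the correct order so that the factors line up as $\norm{A_I A_K A_J}$ (rather than some reversed product) inside the trace. The subsequent passage from the HS lower bound to an operator-norm lower bound is routine, but the dimension-dependent constants from norm equivalence must be absorbed into $\delta$ without obscuring that $\delta$ is strictly positive. Part (1), by contrast, is essentially just the conceptual observation that an $\A$-invariant subspace manufactures an $L_{\A}$-invariant face of the PSD cone.
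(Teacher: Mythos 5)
Your argument is correct, and it reaches the conclusion by a genuinely different (though related) route. For part (2) the paper applies Lemma \ref{approxnormlemma} twice, pairing $L_{I}L_{\A}^{N}L_{J}U$ against $V$ with the compact set $D^{\ast}=(L_{\A}^{\ast})^{N}(\set{W:\inn{U,W}_{\text{HS}}=1})$ supplying the uniform constant; you instead exploit the exact identity $\sum_{\abs{K}=N}\norm{A_{I}A_{K}A_{J}}_{\text{HS}}^{2}=\inn{L_{\A}^{N}(A_{I}^{\ast}A_{I}),A_{J}A_{J}^{\ast}}_{\text{HS}}$ (which I checked: the orderings do line up) together with a $\lambda_{\min}$ lower bound obtained by compactness of the image of the unit sphere of the cone under $L_{\A}^{N}$. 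Both proofs extract the same thing --- a uniform positivity constant from $K$-primitivity via compactness --- but your version has the advantage that the bridge identity is an equality rather than a two-sided comparison, and it avoids introducing the eigenvectors $U,V$ altogether. The real divergence is in part (1): the paper's proof is ``similar to (2)'' (replace $L_{\A}^{N}$ by $\sum_{k=0}^{D-1}L_{\A}^{k}$ with $D$ the dimension of the symmetric matrices) and is therefore self-contained and quantitative, whereas you reduce $K$-irreducibility of $L_{\A}$ to irreducibility of $\A$ via the invariant-face construction $F_{W}$ (which is correct: $F_{W}$ is a nonzero proper $L_{\A}$-invariant face, so Proposition \ref{IrrDefs} applies) and then appeal to the implication ``$\A$ irreducible $\Rightarrow$ \eqref{eq:irrinequality}'' stated without proof in the introduction. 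That is not circular within the paper's logic, but it defers the entire quantitative content of part (1) to an external fact; if you want part (1) to carry its own weight, run your part-(2) computation with $\sum_{k}L_{\A}^{k}$ in place of $L_{\A}^{N}$, accepting that the resulting sum ranges over words $K$ of length at most $\dim S-1$ rather than $d$, which is harmless for every use the paper makes of \eqref{eq:irrinequality}.
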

\begin{proof}
	We will prove (2); then (1) will be similar. Let $L_{i}$ be as in example \ref{ClassicalKusuokaMeasure}. Take $N$ such that $L_{\A}^{N}>^{K}0$ and $U,V$ positive definite matrices. Set $D=\set{U}$ and $D^{\ast}=(L_{\A}^{\ast})^{N}(\set{W\in K^{\ast}:\inn{U,W}_{\text{HS}}=1})$. Notice that $\set{W\in K^{\ast}:\inn{U,W}_{\text{HS}}=1}$ is closed and bounded (by lemma \ref{approxnormlemma}) hence compact. Take $C>0$ as in lemma \ref{approxnormlemma} then
	\begin{align*}
	\sum_{\abs{K}=N} \norm{A_{I}A_{K}A_{J}}^{2}&\geq C^{-1} \inn{L_{I}L_{\A}^{N}L_{J}U,V}_{\text{HS}}\\
	&= C^{-1} \inn{L_{I}U,V}_{\text{HS}} \inn{L_{J}U,(L_{\A}^{\ast})^{N}\frac{L_{I}^{\ast}V}{\inn{L_{I}U,V}}}_{\text{HS}}\\
	&\geq C^{-3} \norm{A_{I}}^{2}\norm{A_{J}}^{2}.
	\end{align*}
	Where we have used the fact that $\norm{L_{I}}=\norm{A_{I}}^{2}$. Therefore 
	\[ \sum_{\abs{K}=N}\norm{A_{I}A_{K}A_{J}}\geq \left(\sum_{\abs{K}=N} \norm{A_{I}A_{K}A_{J}}^{2}\right)^{1/2} \geq C^{-3/2} \norm{A_{I}}\norm{A_{J}}. \]
\end{proof}

\begin{proposition}\label{PSDTCProp}
	Let $k$ be an even number and define
	\[ S=\spn\set{v^{\otimes k}:v \in \R^{d}} \]
	and
	\[ K=\set{w\in S^{\ast}:\inn{v^{\otimes k},w}_{(\R^{d})^{\otimes k}}\geq 0\text{ for all }v\in \R^{d}}. \]
	Then $K$ is a closed cone with non-void interior.
\end{proposition}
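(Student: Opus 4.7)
Here is my proposed approach.

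The three cone axioms are mostly immediate from the definition: positive homogeneity and convexity follow from linearity of the pairing, while $K\cap(-K)=\{0\}$ reduces to showing that if $\langle v^{\otimes k},w\rangle=0$ for all $v\in\R^d$ then $w=0$ as an element of $S^{*}$, which is exactly the statement that $\{v^{\otimes k}:v\in\R^d\}$ spans $S$. Closedness of $K$ is automatic since it is the intersection of the closed half-spaces $\{w:\langle v^{\otimes k},w\rangle\geq 0\}$ indexed by $v\in \R^d$. So the only real content is the existence of an interior point.

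The plan for the interior is to exhibit an explicit candidate and use compactness. Endow $(\R^d)^{\otimes k}$ with its standard inner product (so that $\langle v^{\otimes k},u^{\otimes k}\rangle=\langle v,u\rangle^{k}$) and use it to identify $S^{*}$ with $S$. Take
\[
w_{0}=\sum_{i=1}^{d}e_{i}^{\otimes k}\in S,
\]
viewed as the functional $\xi\mapsto \langle \xi,w_{0}\rangle$ on $S$. Because $k$ is even,
\[
\langle v^{\otimes k},w_{0}\rangle=\sum_{i=1}^{d}\langle v,e_{i}\rangle^{k}=\sum_{i=1}^{d}v_{i}^{k}\geq 0,
\]
with strict inequality whenever $v\neq 0$ (every summand is non-negative and at least one is positive). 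So $w_{0}\in K$.

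To upgrade $w_{0}\in K$ to $w_{0}\in \interior(K)$, consider the map $\ol{v}\mapsto \sum_{i}v_{i}^{k}/\norm{v}^{k}$ on the sphere (or projective space), which is continuous and strictly positive, hence bounded below by some $c>0$ by compactness; equivalently, $\langle v^{\otimes k},w_{0}\rangle\geq c\norm{v^{\otimes k}}$ for every $v$. Now for any $w\in S^{*}$ with operator norm $\norm{w-w_{0}}<c$,
\[
\langle v^{\otimes k},w\rangle\geq \langle v^{\otimes k},w_{0}\rangle-\norm{w-w_{0}}\,\norm{v^{\otimes k}}\geq (c-\norm{w-w_{0}})\norm{v^{\otimes k}}\geq 0,
\]
so the open ball of radius $c$ about $w_{0}$ in $S^{*}$ lies in $K$, proving $w_{0}\in\interior(K)$. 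The only nontrivial step is the compactness-based lower bound on $\sum_i v_i^k/\norm{v}^k$, but this is routine once one verifies positivity on nonzero $v$.
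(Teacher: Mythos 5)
Your proposal is correct and follows essentially the same route as the paper: both exhibit the functional $\xi\mapsto\inn{\xi,\sum_{i}e_{i}^{\otimes k}}$ (the paper writes it as the multilinear map $f(v^{1},\ldots,v^{k})=\sum_{i}v^{1}_{i}\cdots v^{k}_{i}$), use evenness of $k$ and compactness of the normalized tensors $v^{\otimes k}$ to get a uniform lower bound, and conclude that a small ball around this functional lies in $K$. Your treatment of the cone axioms and closedness, which the paper dismisses as trivial, is a harmless elaboration.
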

\begin{proof}
	That $K$ is a closed cone is trivial. Thus we turn our attention to showing that $K$ has a non-void interior. First we note that there exist elements $w \in K$ such that $\inn{v^{\otimes k},w}>0$ for all $v \in \R^{d}\setminus \set{0}$. For example define a multi-linear map $f:(\R^{d})^{k}\to \R$ by 
	\[ f(v^{1},v^{2}, \ldots , v^{k})=\sum_{i=1}^{d} v^{1}_{i}v^{2}_{i}\cdots v^{k}_{i} \]
	this gives a linear map $f:(\R^{d})^{\otimes k}\to \R$ such that
	\[ f(v^{\otimes k})=\sum_{i=1}^{d}v_{i}^{k}>0. \]
	Now if $v^{n}\xrightarrow{n \to \infty} w$ then 
	\begin{align*}
	(v^{n})^{\otimes k}&=\sum_{i_{1}\cdots i_{k}}v^{n}_{i_{1}}\cdots v^{n}_{i_{k}}e_{i_{1}}\otimes \cdots \otimes e_{i_{k}} \\
	&\xrightarrow{n \to \infty} \sum_{i_{1}\cdots i_{k}}w_{i_{1}}\cdots w_{i_{k}}e_{i_{1}}\otimes \cdots \otimes e_{i_{k}}\\
	&=w^{\otimes k}.
	\end{align*}
	Thus $\set{v^{\otimes k}:\norm{v^{\otimes k}}=1}$ is compact. Take $\delta>0$ such that $f(v^{\otimes k})>\delta$ for all $v$ for which $\norm{v^{\otimes k}}=1$. Now if $g \in S^{\ast}$ is such that $\norm{f-g}<\delta/2$ then $g \in K$. Hence $\interior(K)\neq \emptyset$.
\end{proof}

\begin{proposition}\label{irreducibleProp}
	Suppose that $(A_{0},\ldots , A_{M-1})$ is irreducible then $(A_{0}^{\ast}, \ldots , A_{M-1}^{\ast})$ is also irreducible.
\end{proposition}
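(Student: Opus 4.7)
The plan is to prove the contrapositive via the standard orthogonal-complement duality. Suppose for contradiction that $(A_0^\ast, \ldots, A_{M-1}^\ast)$ is \emph{not} irreducible; then there exists a proper, nontrivial subspace $W \subseteq \R^d$ with $A_i^\ast W \subseteq W$ for every $i$.

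The key step is to pass to the orthogonal complement. I would set $W^\perp = \{v \in \R^d : \langle v, w\rangle = 0 \text{ for all } w \in W\}$ and verify two things. First, since $0 < \dim W < d$, we have $0 < \dim W^\perp < d$, so $W^\perp$ is also proper and nontrivial. Second, for any $v \in W^\perp$, any $w \in W$, and any index $i$, the adjoint relation
\[ \langle A_i v, w\rangle = \langle v, A_i^\ast w\rangle = 0 \]
holds because $A_i^\ast w \in W$. Hence $A_i v \in W^\perp$, showing that $W^\perp$ is a common invariant subspace for $(A_0, \ldots, A_{M-1})$.

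This contradicts the hypothesis that $(A_0, \ldots, A_{M-1})$ is irreducible, and the proof is complete. There is really no main obstacle here: the entire content is the observation that taking adjoints and taking orthogonal complements interchange the roles of invariant subspaces, and that this operation preserves the property of being proper and nontrivial in finite dimensions. The argument is symmetric, so the converse implication holds by the same reasoning applied to $(A_i^\ast)^\ast = A_i$.
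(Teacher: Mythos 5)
Your proof is correct and takes essentially the same route as the paper: both rest on the single observation that $A_i^{\ast}W \subseteq W$ forces $A_i W^{\perp} \subseteq W^{\perp}$ via the adjoint identity $\langle A_i v, w\rangle = \langle v, A_i^{\ast}w\rangle$, combined with the fact that $W \mapsto W^{\perp}$ preserves being proper and nontrivial. The paper states it directly while you argue by contrapositive, but the content is identical.
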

\begin{proof}
	Notice that if $A_{i}^{\ast}W \subseteq W$ then $A_{i}W^{\perp} \subseteq W^{\perp}$. To see this consider for any $u \in W^{\perp}$ and $w \in W$ we have
	\[ 0=\inn{A_{i}^{\ast}w,u}= \inn{w,A_{i}u} \]
	which implies that $A_{i}u \in W^{\perp}$. If $A_{i}W \subseteq W$ for all $0 \leq i \leq M-1$ then $W^{\perp}=\set{0}$ or $\R^{d}$ hence $W=\set{0}$ or $\R^{d}$.
\end{proof} 

\bibliography{MESBib}{}

\begin{thebibliography}{10}

\bibitem{MR3820437}
J.~Bochi and I.~D. Morris.
\newblock Equilibrium states of generalised singular value potentials and
  applications to affine iterated function systems.
\newblock {\em Geom. Funct. Anal.}, 28(4):995--1028, 2018.

\bibitem{MR0387539}
R.~Bowen.
\newblock Bernoulli equilibrium states for {A}xiom {A} diffeomorphisms.
\newblock {\em Math. Systems Theory}, 8(4):289--294, 1974/75.

\bibitem{bowen1975equilibrium}
R.~Bowen.
\newblock {\em Equilibrium states and the ergodic theory of {A}nosov
  diffeomorphisms}, volume 470 of {\em Lecture Notes in Mathematics}.
\newblock Springer-Verlag, Berlin, revised edition, 2008.
\newblock With a preface by David Ruelle, Edited by Jean-Ren\'e Chazottes.

\bibitem{MR2866664}
M.~Boyle and K.~Petersen.
\newblock Hidden {M}arkov processes in the context of symbolic dynamics.
\newblock In {\em Entropy of hidden {M}arkov processes and connections to
  dynamical systems}, volume 385 of {\em London Math. Soc. Lecture Note Ser.},
  pages 5--71. Cambridge Univ. Press, Cambridge, 2011.

\bibitem{MR2178042}
R.~C. Bradley.
\newblock Basic properties of strong mixing conditions. {A} survey and some
  open questions.
\newblock {\em Probab. Surv.}, 2:107--144, 2005.
\newblock Update of, and a supplement to, the 1986 original.

\bibitem{MR684493}
R.~C. Bradley, Jr.
\newblock On the {$\psi $}-mixing condition for stationary random sequences.
\newblock {\em Trans. Amer. Math. Soc.}, 276(1):55--66, 1983.

\bibitem{MR2373208}
Y.-L. Cao, D.-J. Feng, and W.~Huang.
\newblock The thermodynamic formalism for sub-additive potentials.
\newblock {\em Discrete Contin. Dyn. Syst.}, 20(3):639--657, 2008.

\bibitem{chazottes2003projection}
J.-R. Chazottes and E.~Ugalde.
\newblock Projection of {M}arkov measures may be {G}ibbsian.
\newblock {\em J. Stat. Phys.}, 111(5-6):1245--1272, 2003.

\bibitem{eveson_nussbaum_1995}
S.~P. Eveson and R.~D. Nussbaum.
\newblock Applications of the {B}irkhoff-{H}opf theorem to the spectral theory
  of positive linear operators.
\newblock {\em Math. Proc. Cambridge Philos. Soc.}, 117(3):491--512, 1995.

\bibitem{MR923687}
K.~J. Falconer.
\newblock The {H}ausdorff dimension of self-affine fractals.
\newblock {\em Math. Proc. Cambridge Philos. Soc.}, 103(2):339--350, 1988.

\bibitem{MR2145793}
D.-J. Feng.
\newblock The limited {R}ademacher functions and {B}ernoulli convolutions
  associated with {P}isot numbers.
\newblock {\em Adv. Math.}, 195(1):24--101, 2005.

\bibitem{MR2739782}
D.-J. Feng.
\newblock Equilibrium states for factor maps between subshifts.
\newblock {\em Adv. Math.}, 226(3):2470--2502, 2011.

\bibitem{MR2784616}
D.-J. Feng and A.~K\"aenm\"aki.
\newblock Equilibrium states of the pressure function for products of matrices.
\newblock {\em Discrete Contin. Dyn. Syst.}, 30(3):699--708, 2011.

\bibitem{MR1909650}
D.-J. Feng and K.-S. Lau.
\newblock The pressure function for products of non-negative matrices.
\newblock {\em Math. Res. Lett.}, 9(2-3):363--378, 2002.

\bibitem{MR0274718}
N.~A. Friedman and D.~S. Ornstein.
\newblock On isomorphism of weak {B}ernoulli transformations.
\newblock {\em Advances in Math.}, 5:365--394 (1970), 1970.

\bibitem{MR2087783}
Y.~Guivarc'h and E.~Le~Page.
\newblock Simplicit\'{e} de spectres de {L}yapounov et propri\'{e}t\'{e}
  d'isolation spectrale pour une famille d'op\'{e}rateurs de transfert sur
  l'espace projectif.
\newblock In {\em Random walks and geometry}, pages 181--259. Walter de
  Gruyter, Berlin, 2004.

\bibitem{MR2978290}
R.~A. Horn and C.~R. Johnson.
\newblock {\em Matrix analysis}.
\newblock Cambridge University Press, Cambridge, second edition, 2013.

\bibitem{MR3667706}
A.~Johansson, A.~\"Oberg, and M.~Pollicott.
\newblock Ergodic theory of {K}usuoka measures.
\newblock {\em J. Fractal Geom.}, 4(2):185--214, 2017.

\bibitem{MR1025071}
S.~Kusuoka.
\newblock Dirichlet forms on fractals and products of random matrices.
\newblock {\em Publ. Res. Inst. Math. Sci.}, 25(4):659--680, 1989.

\bibitem{morris_2017}
I.~D. Morris.
\newblock Ergodic properties of matrix equilibrium states.
\newblock {\em Ergodic Theory Dynam. Systems}, 38(6):2295--2320, 2018.

\bibitem{morris_2017_2}
I.~D. Morris.
\newblock A necessary and sufficient condition for a matrix equilibrium state
  to be mixing.
\newblock {\em Ergodic Theory and Dynamical Systems}, to appear.

\bibitem{MR0257322}
D.~S. Ornstein.
\newblock Bernoulli shifts with the same entropy are isomorphic.
\newblock {\em Advances in Math.}, 4:337--352, 1970.

\bibitem{MR0399415}
D.~S. Ornstein.
\newblock On the root problem in ergodic theory.
\newblock In {\em Proceedings of the {S}ixth {B}erkeley {S}ymposium on
  {M}athematical {S}tatistics and {P}robability ({U}niv. {C}alifornia,
  {B}erkeley, {C}alif., 1970/1971), {V}ol. {II}: {P}robability theory}, pages
  347--356. Univ. California Press, Berkeley, Calif., 1972.

\bibitem{HolderGibbsStates}
M.~Piraino.
\newblock Projections of {G}ibbs states for {H}{\"o}lder potentials.
\newblock {\em J. Stat. Phys.}, 170(5):952--961, Mar 2018.

\bibitem{MR0244284}
J.~S. Vandergraft.
\newblock Spectral properties of matrices which have invariant cones.
\newblock {\em SIAM J. Appl. Math.}, 16:1208--1222, 1968.

\bibitem{MR2122435}
P.~Walters.
\newblock Regularity conditions and {B}ernoulli properties of equilibrium
  states and {$g$}-measures.
\newblock {\em J. London Math. Soc. (2)}, 71(2):379--396, 2005.

\bibitem{Yoo2010}
J.~Yoo.
\newblock On factor maps that send {M}arkov measures to {G}ibbs measures.
\newblock {\em J. Stat. Phys.}, 141(6):1055--1070, 2010.

\end{thebibliography}
\bibliographystyle{abbrv}

\end{document}